\documentclass[11pt]{amsart} \textwidth=14.5cm \oddsidemargin=1cm
\evensidemargin=1cm
\usepackage{amsmath, amstext, amsbsy, amssymb, amscd, mathrsfs}
\usepackage{amsmath}
\usepackage{amsxtra}
\usepackage{amscd}
\usepackage{amsthm}
\usepackage{amsfonts}
\usepackage{amssymb}
\usepackage{eucal}
\usepackage{color}
\usepackage[all]{xy}
\usepackage{bbding}
\usepackage{pmgraph}
\usepackage{stmaryrd}
\usepackage{graphicx}
\usepackage{bm}
\usepackage[enableskew,vcentermath]{youngtab}

\newtheorem{theorem}{Theorem}[section]

\newtheorem{lemma}[theorem]{Lemma}
\newtheorem{proposition}[theorem]{Proposition}
\newtheorem{corollary}[theorem]{Corollary}
\newtheorem{example}[theorem]{Example}
\theoremstyle{definition}
\newtheorem{definition}[theorem]{Definition}

\theoremstyle{remark}
\newtheorem{remark}[theorem]{Remark}

\definecolor{A}{rgb}{.75,1,.75}

\numberwithin{equation}{section}

\numberwithin{equation}{section}

\newcommand{\ds}{\displaystyle}

\newcommand{\C}{\mathbb C}
\newcommand{\Z}{\mathbb Z}

\newcommand{\mf}{\mathfrak}

\newcommand{\Hom}{\text{Hom} }
\newcommand{\undm}{\underline{m}}
\newcommand{\al}{\alpha}

\newcommand{\ee}{\varepsilon}

\newcommand{\mc}{\mathcal}
\newcommand{\HC}{\mathcal{H}_n}
\newcommand{\la}{\lambda}

\newcommand{\Ga}{\Gamma}
\newcommand{\La}{\Lambda}

{\vskip-\lastskip\medskip
  \noindent
  {\em #1.}\enspace
  }%
{\qed\par\medskip
  }

\begin{document}

\title[Centers of Hecke algebras ]{Frobenius map for the centers of Hecke algebras}
\author[Jinkui Wan and Weiqiang Wang]{Jinkui Wan and Weiqiang Wang}
\address{
(Wan) School of Mathematics, Beijing Institute of Technology,
Beijing, 100081, P.R. China. } \email{wjk302@gmail.com}

\address{(Wang) Department of Mathematics, University of Virginia,
Charlottesville,VA 22904, USA.}
\email{ww9c@virginia.edu}

\begin{abstract}
We introduce a commutative associative graded algebra structure on
the direct sum $\mathcal Z$ of the centers of the Hecke algebras associated
to the symmetric groups in $n$ letters for all $n$.
As a natural deformation of the classical construction
 of Frobenius, we
establish an algebra isomorphism from $\mathcal Z$ to the ring
of symmetric functions. This isomorphism provides
an  identification between several distinguished
bases for the centers (introduced by Geck-Rouquier, Jones, Lascoux)
 and explicit bases of symmetric functions.
\end{abstract}

\subjclass[2000]{Primary: 20C08. Secondary: 05E05}
\keywords{Hecke algebras, centers, symmetric functions, Frobenius map}

\maketitle

\section{Introduction}

Frobenius~\cite{Fro} constructed a natural isomorphism $\psi$ from
the direct sum of the centers of the symmetric group algebras
to  the space of symmetric functions, which
sends the conjugacy class sums to the power-sum symmetric functions
(up to some scalars).
This gives rise to the characteristic map
under which the irreducible characters  of symmetric groups
are matched with the Schur functions (cf. \cite[I. 7]{Mac}).
This construction of Frobenius has played a fundamental
role in the interaction between  the representation
theory of symmetric groups and combinatorics of symmetric functions.

The Hecke algebra $\HC$
is a $v$-deformation of the group algebra of the symmetric group $\mf S_n$.
The structures of the centers $\mathcal Z(\HC)$
of the Hecke algebras have been the subject of
studies by various authors (cf. \cite{DJ, FrG, FW, GR, Jo, La}).
Using the class polynomials for Hecke algebras \cite{GP},
Geck and Rouquier \cite{GR} constructed a basis of $v$-class sums
for the center $\mathcal Z(\HC)$,
which specializes at $v=1$ to the usual class sums of $\mf S_n$. There is a neat
characterization of these $v$-class sums by Francis \cite{Fra} though
no closed formula is known.
Extending a basis of Jones \cite{Jo} of the center $\mc Z(\HC)$,
Lascoux~\cite{La} constructed several more bases of $\mathcal Z(\HC)$, and moreover,
he  provided an elegant description of the transition matrices from these bases
to the Geck-Rouquier basis in terms of the transition
matrices of well-known bases  in the ring of symmetric functions.
Lascoux stated the identifications
of these bases of $\mc Z(\HC)$ with  symmetric functions
 as an open problem.

The goal of this paper is to formulate a Frobenius map
$\Psi: \mathcal Z \longrightarrow \Lambda_v$,
 which is a $v$-deformation of the classical Frobenius map $\psi$,
from  the direct sum
$\mathcal Z$ of the centers of $\HC$ for all $n$
to the $\C(v)$-space $\Lambda_v$ of symmetric functions.
We establish various basic properties of $\Psi$ including a precise connection to
the character formula in \cite{Ram}, and in particular,
we provide a positive answer to Lascoux's question of identifying
bases in $\mathcal Z$ and $\Lambda_v$.
It is our belief that the Frobenius map $\Psi$ will play a
fundamental role in
further understanding of the centers of Hecke algebras with
powerful tools from the theory of symmetric functions.

Let us explain in some detail.
As in the case of symmetric groups, considering the centers
of $\HC$ for all $n$ simultaneously is again crucial in the formulation of this paper
(surprisingly, this view was not taken advantage of in the literature).
We  first construct a graded linear isomorphism  $\Psi: \mathcal Z \longrightarrow \Lambda_v$
by sending the Geck-Rouquier class elements
to a variant of monomial symmetric functions in the $\la$-ring notation.
Our Frobenius map $\Psi$ is compatible with Lascoux's earlier remarkable
work on transition matrices between bases, and in this way,
we are able to identify precisely (see Theorem~\ref{PsiLascoux})
various bases of $\mc Z(\HC)$ in \cite{La}
with well-known bases in $\Lambda_v$ of degree $n$
up to some $\la$-ring notation shifts, that is, $\{s_\la\}, \{m_\la\},
\{e_\la\}, \{h_\la\}$ and $\{p_\la\}$ in
Macdonald's notation.

Via a notion of relative norm on Hecke algebras
(which goes back to unpublished work of Peter Hoefsmit and Leonard Scott in 1976
according to \cite{Jo}), we show that
$\mathcal Z$ is naturally a commutative associative graded algebra and that
$\mathcal Z$ is isomorphic to a polynomial algebra. Then we show that
$\Psi: \mathcal Z \rightarrow \Lambda_v$
is actually an isomorphism of graded algebras. Our proof
uses the identification under $\Psi$ of one basis of Lascoux for $\mathcal Z$
with a multiplicative basis for $\Lambda_v$.
Using $\Psi$, we are able to provide new simple
proofs for Lascoux's difficult results on some other bases of $\mathcal Z$.

The Frobenius map $\Psi$ is shown to fit perfectly with the character formula
for the Hecke algebras established in~\cite{Ram}.
Moreover, $\Psi$ readily gives rise to a $v$-characteristic map $\text{ch}_v$
for the Hecke algebra $\HC$
which sends the irreducible characters to the corresponding Schur functions; see
the commutative diagram \eqref{eq:cd} relating $\Psi$ to $\text{ch}_v$.
In our approach, $\Psi$ plays a more fundamental role than $\text{ch}_v$ just
as in the original formulation of Frobenius.

As pointed out by the referee, in light of the commutative diagram \eqref{eq:cd}, in principle one could present the results of this paper by turning
around the constructions of Frobenius map $\Psi$ and the $v$-characteristic map ${\rm ch}_v$ (we were aware of this option too).
More precisely, one can first define
the $v$-characteristic map $\text{ch}_v$ by sending the irreducible characters of Hecke algebras to the corresponding Schur functions.
Via the commutative diagram \eqref{eq:cd}, $\text{ch}_v$ gives rise to
a {\em linear isomorphism} $\Psi:\mc Z\rightarrow\La_v$, which
sends primitive central idempotents to Schur functions (up to scalars).
Dualizing Frobenius character formula in \cite{Ram},
one readily shows that $\Psi$ maps the Geck-Rouquier class elements
to a variant of monomial symmetric functions in the $\la$-ring notation (which we took as the definition of $\Psi$ in this paper).
However, we do not know of a direct proof that $\Theta$ in \eqref{eq:cd} is an {\em algebra} isomorphism,
and so we can not derive that $\Psi$ is an algebra isomorphism except
by relating to Lascoux's results in \cite{La} as presented in this paper.
The way we choose to organize this paper has the additional advantage of making a large portion of the paper
on basic properties of $\Psi$ independent of Ram's character formula (which was highly nontrivial to establish).

Here is a layout of the paper.
In Section~\ref{sec:center}, we review some basic constructions
on Hecke algebras including Geck-Rouquier basis. In Section~\ref{sec:Psi},
we show that $\mathcal Z$ endowed with a product  from
relative norm is commutative, associative, and isomorphic to
a polynomial algebra.
The Frobenius map $\Psi$
is defined and shown to be an isomorphism of graded algebras.
The images of various bases of the centers
under the isomorphism $\Psi$ are determined in Section~\ref{sec:identification}.
In Section~\ref{sec:v-ch}, we show how $\Psi$ gives rise to a $v$-characteristic
map which identifies the irreducible characters with Schur functions.

{\bf Acknowledgements.}
We thank Shun-Jen Cheng and Institute of Mathematics, Academia Sinica, Taipei for support
and providing an excellent atmosphere in December 2011, during which
this project was initiated.
The first author was partially supported by
NSFC-11101031, and the second author was partially
supported by NSF DMS-1101268.

\section{Centers of Hecke algebras}
 \label{sec:center}

\subsection{The Hecke algebra $\HC$}
Let $\mf S_n$ be the symmetric group in $n$ letters generated
by the simple transpositions $s_i=(i,i+1), i=1,\ldots, n-1$.
Let $v$ be an indeterminate. The Hecke algebra $\HC$ is the unital
$\C(v)$-algebra generated by $T_1,\ldots, T_{n-1}$
satisfying the relations
\begin{align*}
(T_k-v)(T_k+1)&=0,\quad 1\leq k\leq n-1,\\
T_kT_{k+1}T_k&=T_{k+1}T_kT_{k+1},\quad 1\leq k\leq n-2,\\
T_kT_{\ell}&=T_{\ell}T_k,\quad |k-\ell|>1.
\end{align*}
If $w=s_{i_1}\cdots s_{i_r}$ is a reduced expression, then we define
$T_w=T_{i_1}\cdots T_{i_r}$.
The Hecke algebra $\HC$ is a free $\C(v)$-module with
basis $\{T_w~|w\in\mf S_n\}$, and it can be regarded as a $v$-deformation
of the group algebra $\C\mf S_n$ of the symmetric group $\mf S_n$.

Denote by $\mc P_n$ (resp. $\mc {CP}_n$) the set of partitions (resp. compositions) of $n$.
For each $\la\in\mc P_n$, there exists an irreducible character
$\chi^{\la}_v$ of $\HC$ such that $\{\chi^{\la}_v~|\la\in\mc P_n\}$ is a complete set
of irreducible characters of $\HC$; moreover the specialization of $\chi^{\la}_v$
at $v=1$ coincides with the character $\chi^{\la}$ of the Specht module of $\mf S_n$ associated to $\la$
for $\la\in\mc P_n$.

\subsection{Geck-Rouquier class elements}

For $\la\in\mc P_n$, denote by $C_{\la}$ the conjugacy class of $\mf S_n$
containing all permutations of cycle type $\la$, and accordingly let
$c_{\la}$ be the class sum of $C_{\la}$.
Clearly, the set $\{c_{\la}~|\la\in\mc P_n\}$ forms a linear basis of the center
of the group algebra $\C \mf S_n$.

There exists a canonical symmetrizing trace form $\tau$ on $\HC$ defined as follows~\cite{GP}
\begin{align*}
\tau:\HC \longrightarrow\C(v), \qquad
T_w \mapsto
\left\{
\begin{array}{ll}
1,&\text{ if }w=1,\\
0,&\text{ otherwise}.
\end{array}
\right.
\end{align*}
Then the bilinear form $(\cdot,\cdot)$ induced by $\tau$ on $\HC$ satisfies
$(T_w,T_\sigma)=\delta_{w,\sigma^{-1}}v^{\ell(w)}$ and hence the basis dual to
$\{T_w~|w\in\mf S_n\}$ is
$$
T_w^\vee :=v^{-\ell(w)}T_{w^{-1}}, \quad \text{ for } w\in\mf S_n.
$$
For $\chi\in\Hom_{\C(v)}(\HC,\C(v))$, define
\begin{equation}\label{dual}
\chi^*=\sum_{w\in\mf S_n}\chi(T_w)T_w^\vee.
\end{equation}
Then it is easy to check that $\chi$ is a trace function on $\HC$, that is, $\chi(hh')=\chi(h'h)$ for
$h,h'\in\HC$, if and only if $\chi^*$ is central in $\HC$.

For $\la=(\la_1,\la_2,\ldots,\la_{\ell})\in\mc P_n$,
set
$$
w_{\la}=(1,\ldots,\la_1)(\la_1+1,\ldots, \la_1+\la_2)
\cdots(\la_1+\cdots+\la_{\ell-1}+1,\ldots, \la_1+\cdots+\la_{\ell}).
$$
Observe that $w_{\la}$ is a minimal length element in
the conjugacy class $C_{\la}$.
It follows from \cite[Theorem~5.1]{Ram} (cf. \cite[Section~8.2]{GP}) that
for $w\in\mf S_n$, there exists $f_{w,\la}\in\Z[v,v^{-1}]$ such that
$$
T_w\equiv \sum_{\la\in\mc P_n}f_{w,\la}T_{w_{\la}}\mod [\HC,\HC],
$$
and moreover
\begin{align*}
f_\la:\HC \longrightarrow\C(v), \qquad
T_w \mapsto f_{w,\la}
\end{align*}
is a trace function.
Then we obtain the Geck-Rouquier central elements
$$
f_\la^*=\sum_{w\in\mf S_n}f_{w,\la}T_w^\vee
=\sum_{w\in\mf S_n}v^{-\ell(w)}f_{w,\la}T_{w^{-1}}, \quad \text{ for }\la\in\mc P_n.
$$
Observe that $f^*_{\la}$ specializes at $v=1$ to the class sum $c_{\la}$.

\begin{lemma}\cite{GR}
The set $\{f_\la^*~|\la\in\mc P_n\}$ forms a basis of the center
$\mc Z(\HC)$ of $\HC$.
\end{lemma}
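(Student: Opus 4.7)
The plan is to reduce the statement to showing linear independence of $\{f_\la^* \mid \la \in \mc P_n\}$, since centrality is automatic and the dimension of the center is known. Centrality is immediate from the construction: $f_\la$ is a trace function, and, as noted in the excerpt, $\chi^*$ is central whenever $\chi$ is a trace function. Moreover $\HC$ is split semisimple over $\C(v)$ with $|\mc P_n|$ irreducible characters $\chi^\la_v$ (as already recorded in the excerpt), so $\dim_{\C(v)} \mc Z(\HC) = |\mc P_n|$, matching the cardinality of the proposed basis.

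For linear independence, the natural strategy is to evaluate the trace functions $f_\la$ on the minimal length representatives $T_{w_\mu}$ and show that the matrix $(f_\la(T_{w_\mu}))_{\la, \mu \in \mc P_n}$ is the identity. Applying the defining congruence $T_w \equiv \sum_\la f_{w, \la} T_{w_\la} \pmod{[\HC, \HC]}$ to $w = w_\mu$ gives $T_{w_\mu} \equiv \sum_\la f_{w_\mu, \la} T_{w_\la} \pmod{[\HC, \HC]}$; since the class polynomials are uniquely determined by this congruence (equivalently, the residues of the $T_{w_\la}$ are $\C(v)$-linearly independent in $\HC/[\HC,\HC]$, a standard feature of the Geck--Pfeiffer framework), one reads off $f_{w_\mu, \la} = \delta_{\mu, \la}$. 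Hence the $f_\la$ are $\C(v)$-linearly independent trace functions, and because $\chi \mapsto \chi^* = \sum_w \chi(T_w) T_w^\vee$ is an injective linear map (its coefficients in the dual basis $\{T_w^\vee\}$ literally recover the values $\chi(T_w)$), linear independence transfers to $\{f_\la^*\}$.

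The main obstacle I anticipate is ensuring direct access to the triangularity $f_{w_\mu, \la} = \delta_{\mu, \la}$, which rests on the uniqueness aspect of class polynomials on distinguished representatives. Should this be inconvenient, a robust fallback is to specialize at $v=1$: given any relation $\sum_\la a_\la(v) f_\la^* = 0$ with $a_\la \in \C(v)$, clear denominators and cancel common powers of $(v-1)$ so that $a_\la \in \C[v]$ and at least one $a_\la(1) \neq 0$; specializing at $v=1$ then turns this into a nontrivial linear relation among the class sums $c_\la$ in $\mc Z(\C \mf S_n)$, contradicting their well-known linear independence. Either route completes the argument.
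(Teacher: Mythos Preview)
Your argument is correct. The paper itself does not prove this lemma; it merely records it with a citation to \cite{GR}, so there is no ``paper's own proof'' to compare against. Your approach is precisely the standard one underlying the Geck--Rouquier construction (cf.\ \cite[Chapter~8]{GP}): the images of the $T_{w_\la}$ form a basis of the cocenter $\HC/[\HC,\HC]$, so the class polynomials are uniquely determined and in particular $f_\la(T_{w_\mu})=\delta_{\la\mu}$; the dual map $\chi\mapsto\chi^*$ is then a linear isomorphism from trace functions onto $\mc Z(\HC)$, carrying the linearly independent family $\{f_\la\}$ to a basis $\{f_\la^*\}$. Your specialization-at-$v=1$ fallback is also a valid and commonly used alternative for the linear independence step.
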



The definition of the Hecke algebra $\HC$ used in this paper is
different from the one used in \cite{La}, where the parameter $q$ is used.
The generators in \cite{La}, which we denote by  $\widetilde{T}_1,\ldots,
\widetilde{T}_{n-1}$, are related to $T_1,\ldots, T_{n-1}$ via
$v=q^2$ and $\widetilde{T}_i=v^{-\frac 12}T_i$, for $1\leq i\leq n-1$.
For $w\in \mf S_n$, we set
\begin{equation} \label{eq:Ttilde}
\widetilde{T}_w = v^{-\frac{\ell(w)}2} T_w.
\end{equation}
The element $f^*_{\la}$
is related to the central element $\Ga_{\la}$ used in \cite{Fra, La}
via
$$
f_\la^*=v^{-\frac{n-\ell(\la)}{2}}\Ga_{\la}, \quad \text{ for }\la\in\mc P_n.
$$

%
\subsection{Central idempotents}
Since $\HC$ is semisimple and $\tau$ is a symmetrizing trace function, we can write
\begin{equation}\label{selement}
\tau=\sum_{\la\in\mc P_n}\frac{1}{\kappa_\la(v)}\chi^{\la}_v,
\end{equation}
where $\kappa_\la(v) \in \Z[v,v^{-1}]$ are known as the {\em Schur elements} for $\HC$ (cf. \cite[Theorem ~7.2.6]{GP} where the notation $c_\la$ is used for Schur elements).

Denote by
$$
\ds P_{n}(v)=\sum_{\sigma\in \mf S_n}v^{\ell(\sigma)}=\frac{\prod^n_{k=1}(v^k-1)}{(v-1)^n}
$$ the
Poincar{\'e} polynomial of the symmetric group $\mf S_n$.
The {\em generic degree} $d_\la(v)$ of $\chi^\la_v$ is defined to be
\begin{equation}\label{generic}
d_\la(v)=\frac{P_n(v)}{\kappa_\la(v)}.
\end{equation}
It turns out that $d_\la(v)$ is a polynomial in $v$, and $d_\la(1)$ equals the degree
$d_\la$ of the irreducible character
$\chi^\la$.
A closed formula for $d_\la(v)$ due to
Steinberg can be found in  \cite[Theorem 10.5.3]{GP}.

For $\la\in\mc P_n$, set
$$
\ee^\la_v=\frac{1}{\kappa_\la(v)}\sum_{w\in\mf S_n}\chi^\la_v(T_w)T_w^\vee.
$$

\begin{lemma}\cite[Proposition 4.2]{GR}\label{idempotent}
The following holds for $\la\in\mc P_n$:
\begin{enumerate}
\item $\ee^{\la}_v$ is a central primitive idempotent in $\HC$.

\item $\ds \ee^\la_v=\frac{1}{\kappa_\la(v)}(\chi^\la_v)^*
=\frac{1}{\kappa_\la(v)}\sum_{\mu\in\mc P_n}\chi^\la_v(T_{w_\mu})f^*_\mu$.
\end{enumerate}
\end{lemma}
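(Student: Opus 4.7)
The plan is to verify the two parts essentially independently, with part (2) reducing to a direct computation via the class polynomial decomposition, while part (1) requires identifying $\ee^\la_v$ with the canonical primitive central idempotent of the $\la$-block of $\HC$ by means of the symmetrizing form $\tau$.

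I would begin with part (2). The first equality is just the definition \eqref{dual} of $(\chi^\la_v)^*$ with the prefactor $\frac{1}{\kappa_\la(v)}$ included. For the second equality, the idea is to apply $\chi^\la_v$ to the class polynomial congruence $T_w\equiv\sum_\mu f_{w,\mu}T_{w_\mu}$ modulo $[\HC,\HC]$. Since $\chi^\la_v$ is a trace function and hence vanishes on commutators, this yields the scalar identity $\chi^\la_v(T_w)=\sum_\mu f_{w,\mu}\chi^\la_v(T_{w_\mu})$. Substituting back into the first equality and swapping the order of summation then rewrites
\[
\ee^\la_v=\frac{1}{\kappa_\la(v)}\sum_{\mu\in\mc P_n}\chi^\la_v(T_{w_\mu})\sum_{w\in\mf S_n}f_{w,\mu}T_w^\vee=\frac{1}{\kappa_\la(v)}\sum_{\mu\in\mc P_n}\chi^\la_v(T_{w_\mu})f^*_\mu,
\]
since the inner sum is by definition $f^*_\mu$.

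For part (1), centrality of $\ee^\la_v$ was already recorded after \eqref{dual}, since $\chi^\la_v$ is a trace. The real task is to show $\ee^\la_v$ is a primitive idempotent, and the strategy is to identify it with the primitive central idempotent $e_\la$ attached to $\chi^\la_v$ in the Wedderburn decomposition of $\HC$. The essential tool is the duality expansion $h=\sum_{w\in\mf S_n}\tau(hT_w)T_w^\vee$ valid for any $h\in\HC$, which follows from $\tau(T_wT_\sigma^\vee)=\delta_{w,\sigma}$. Applying this to $h=e_\la$ and using that $e_\la$ acts as the identity on the irreducible afforded by $\chi^\la_v$ and as zero on all other irreducibles, one has $\chi^\mu_v(e_\la T_w)=\delta_{\la\mu}\chi^\la_v(T_w)$; combined with the decomposition \eqref{selement} of $\tau$, this gives $\tau(e_\la T_w)=\frac{1}{\kappa_\la(v)}\chi^\la_v(T_w)$, from which
\[
e_\la=\sum_{w\in\mf S_n}\tau(e_\la T_w)T_w^\vee=\frac{1}{\kappa_\la(v)}\sum_{w\in\mf S_n}\chi^\la_v(T_w)T_w^\vee=\ee^\la_v.
\]

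There is no serious conceptual obstacle, as the argument is standard for symmetric algebras. The step that needs the most attention is the compatibility between the duality $\tau(T_wT_\sigma^\vee)=\delta_{w,\sigma}$ and the decomposition \eqref{selement} of $\tau$; it is precisely this compatibility that forces the normalizing scalar $1/\kappa_\la(v)$ to appear as in the definition of $\ee^\la_v$.
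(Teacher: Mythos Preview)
Your argument is correct in both parts. Note, however, that the paper does not supply its own proof of this lemma: it is quoted verbatim from \cite[Proposition~4.2]{GR}, so there is no in-paper argument to compare against. What you have written is precisely the standard proof for a split semisimple symmetric algebra (expand a central element in the dual basis via $h=\sum_w\tau(hT_w)T_w^\vee$, then use the Wedderburn decomposition together with \eqref{selement} to compute the coefficients), and this is essentially the approach taken in \cite{GR} and \cite[\S7.2]{GP}. Your derivation of the second equality in part~(2) via the class-polynomial congruence and the definition of $f_\mu^*$ is likewise the intended one.
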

Since the specializations of $\tau$ and $\chi^\la_v$ at $v=1$
coincide with the canonical trace function on $\C\mf S_n$ and the irreducible character $\chi^\la$,
respectively, $\ee^\la_v$ specializes to the central primitive idempotent
$\ee^\la$ in $\C\mf S_n$ given by
\begin{equation}\label{idempotent0}
\ee^\la=\frac{d_\la}{n!}\sum_{\mu\in\mc P_n}\chi^\la(w_\mu)c_\mu.
\end{equation}

\section{The Frobenius map $\Psi$}
\label{sec:Psi}

\subsection{Lascoux's results}\label{subsec:Z}
For a composition $\alpha=(\al_1,\ldots, \al_{\ell})\in\mc{CP}_n$,
denote by $\mc H_{\al}$ the Hecke algebra associated to the
Young subgroup $\mf S_{\al}$, which can be identified with the subalgebra of $\HC$
generated by the elements $T_i$ with $1\leq i\leq n-1$ and
$i\neq \al_1,\al_1+\al_2,\ldots,\al_{1}+\cdots+\al_{\ell-1}$.
Denote by $\mc D_{\al}$ the set of minimal length coset representatives of $\mf S_{\al}$ in $\mf S_n$.
For $h\in\mc H_{\al}$, define the relative norm \cite{Jo}
\begin{equation}\label{eq:norm}
\mc N_{\al}(h):=\sum_{w\in \mc D_{\al}}v^{-\ell(w)}T_whT_{w^{-1}}.
\end{equation}
The following fundamental result is due to Jones.

\begin{proposition}\cite[Proposition 2.13]{Jo}
$\mc N_{\al}(h)\in\mc Z(\HC)$ if $h$ is central in $\mc H_{\al}$.
\end{proposition}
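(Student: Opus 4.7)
The plan is to verify centrality by checking that $T_s\mc N_\al(h)=\mc N_\al(h)T_s$ for each simple reflection $s=s_i$, since these generate $\HC$. The combinatorial input is the standard trichotomy for a pair $(w,s)$ with $w\in\mc D_\al$: either (a) $sw\in\mc D_\al$ (in which case $\ell(sw)=\ell(w)\pm 1$), or (b) $sw=ws_j$ for some simple reflection $s_j\in\mf S_\al$; moreover, minimality of $w$ in its coset $w\mf S_\al$ forces $\ell(sw)=\ell(w)+1$ in case (b).

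Case (b) is where the hypothesis on $h$ enters. Using $T_sT_w=T_{sw}=T_wT_{s_j}$ together with the identity $s_jw^{-1}=w^{-1}s$ (with $\ell(s_jw^{-1})=\ell(w^{-1})+1$), one obtains
\[
T_s\bigl(v^{-\ell(w)}T_whT_{w^{-1}}\bigr)=v^{-\ell(w)}T_w(T_{s_j}h)T_{w^{-1}}=v^{-\ell(w)}T_wh(T_{s_j}T_{w^{-1}})=v^{-\ell(w)}T_whT_{w^{-1}}T_s,
\]
where the middle equality is precisely the centrality of $h$ in $\mc H_\al$. Thus the $w$-term already commutes with $T_s$ on its own and contributes equally to both sides of the desired identity.

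In case (a), I would pair the summand indexed by $w$ with the one indexed by $sw$, both of which lie in $\mc D_\al$. Assuming $\ell(sw)=\ell(w)+1$ (the other subcase is just relabeling), the quadratic relation yields
\[
T_sT_{sw}=(v-1)T_{sw}+vT_w,\qquad T_{(sw)^{-1}}T_s=(v-1)T_{(sw)^{-1}}+vT_{w^{-1}},
\]
using $(sw)^{-1}=w^{-1}s$ and $\ell(w^{-1}s)>\ell(w^{-1})$. A short computation then shows that $T_s\mc N_\al(h)$ and $\mc N_\al(h)T_s$ both acquire from the pair $\{w,sw\}$ exactly the same three contributions: $v^{-\ell(w)}T_{sw}hT_{w^{-1}}$, $v^{-\ell(w)}T_whT_{(sw)^{-1}}$, and $v^{-\ell(w)-1}(v-1)T_{sw}hT_{(sw)^{-1}}$.

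The main obstacle is the bookkeeping in case (a): one must carefully balance the factors $v^{-\ell(w)}$ and $v^{-\ell(sw)}=v^{-\ell(w)-1}$ against the $(v-1)$ cross terms coming from the quadratic relation, and keep track of which monomial arises from which factor. One also needs the minimality argument in case (b) to rule out $\ell(sw)=\ell(w)-1$. Once the trichotomy and length identities are in place, the verification is a direct case check, and no tools beyond the defining relations of $\HC$ and the centrality of $h$ in $\mc H_\al$ are required.
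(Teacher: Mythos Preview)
Your argument is correct and is essentially the standard proof (due to Jones) via Deodhar's lemma; the paper itself does not supply a proof but merely cites \cite[Proposition~2.13]{Jo}. The only point worth making explicit is that the trichotomy you invoke is precisely Deodhar's lemma for minimal-length left coset representatives of a parabolic subgroup, and your pairing/bookkeeping in case~(a) checks out exactly as you indicate.
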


Denote by $\La$ the ring of symmetric functions in variables $x=\{x_1, x_2,\ldots\}$ over
$\C$  and let $\La^k$ be the subspace of $\La$
consisting of symmetric functions of degree $k$ for $k\geq 0$.
Then $\La^n$ admits several distinguished bases:
the monomial symmetric functions $\{m_{\la}~|\la\in\mc P_n\}$,
the elementary symmetric functions $\{e_{\la}~|\la\in\mc P_n\}$,
the homogeneous symmetric functions $\{h_{\la}~|\la\in\mc P_n\}$,
the power symmetric functions $\{p_{\la}~|\la\in\mc P_n\}$,
and the Schur functions $\{s_{\la}~|\la\in\mc P_n\}$.
Following \cite{Mac}, a transition matrix $M(u,v)$ from
a basis $\{u_\la\}$ to another basis $\{v_\la\}$ means that $u_\la =\sum_\mu M(u,v)_{\la,\mu} v_\mu$.
 Denote by $M(e,m)$ (resp. $M(h,m)$, $M(p,m)$) the transition matrix
from the basis $\{e_{\la}~|\la\in\mc P_n\}$
(resp. $\{h_{\la}~|\la\in\mc P_n\}$, $\{p_{\la}~|\la\in\mc P_n\}$)
to the basis $\{m_{\la}~|\la\in\mc P_n\}$ of $\La^n$.
Let
\begin{equation}\label{eq:D}
D={\rm diag}\{(v-1)^{n-\ell(\la)}\}_{\la\in\mc P_n}
\end{equation}
be the diagonal matrix with entries
$(v-1)^{n-\ell(\la)}$ for $\la\in\mc P_n$.

\begin{proposition}\cite[Theorem 6, Theorem 7]{La}
\label{prop:Lascoux}
\begin{enumerate}
\item Given a composition $\alpha=(\alpha_1,\ldots,\alpha_\ell)\in\mc{CP}_n$, the central element
$\mc N_\alpha(1)$ can be written as
$$
\mc N_\alpha(1)=\sum_{\mu\in\mc P_n}(v-1)^{n-\ell(\mu)}b_{\al\mu} f_\mu^*,
$$
where $b_{\al\mu}$ is equal to the number of the 0-1 matrices having row sums $\al$
and columns sums $\mu$.

\item
The set $\{\mc N_{\la}(1)~|\la\in\mc P_n\}$ is a basis of $\mc Z(\HC)$.
Moreover the transition matrix from $\{\mc N_{\la}(1)~|\la\in\mc P_n\}$ to the $v$-class sum basis
$\{f^*_{\la}~|\la\in\mc P_n\}$ is equal to the product
$$
M(e,m)\cdot D.
$$

\end{enumerate}
\end{proposition}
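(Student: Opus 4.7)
The plan is to reduce part (1) to a single combinatorial identity via the trace form, and to deduce part (2) formally from symmetric-function theory. Since $w_\mu$ has minimal length in its conjugacy class $C_\mu$, the congruence $T_{w_\mu} \equiv T_{w_\mu}$ modulo $[\HC,\HC]$ forces $f_{w_\mu,\lambda} = \delta_{\lambda,\mu}$; combining this with $\tau(T_{w^{-1}}T_{w_\mu}) = \delta_{w,w_\mu}\,v^{\ell(w)}$ yields the dual-basis identity
\[
\tau\bigl(f_\lambda^* \, T_{w_\mu}\bigr) = \delta_{\lambda,\mu}.
\]
Writing $\mc N_\alpha(1) = \sum_\mu a_\mu f_\mu^*$ then gives $a_\mu = \tau(\mc N_\alpha(1)\, T_{w_\mu})$. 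Applying cyclicity of $\tau$ and $\tau(T_\sigma T_{w^{-1}}) = \delta_{\sigma,w}\,v^{\ell(w)}$ collapses this to
\[
a_\mu \;=\; \sum_{w \in \mc D_\alpha}\, [T_w]\bigl(T_{w_\mu}T_w\bigr),
\]
where $[T_w]$ denotes coefficient extraction in the standard basis. Part (1) thereby reduces to the combinatorial identity
\[
\sum_{w \in \mc D_\alpha}\, [T_w]\bigl(T_{w_\mu}T_w\bigr) \;=\; (v-1)^{n-\ell(\mu)}\, b_{\alpha\mu}.
\]

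To establish this identity, I would expand $T_{w_\mu}T_w$ by repeated application of the quadratic relation $T_i^2 = (v-1)T_i + v$. The expansion is naturally organized as a sum over walks starting from a reduced word for $w_\mu w$, where each application either preserves length (contributing a factor $v-1$) or reduces it by two (contributing $v$); only walks returning to $w$ contribute to $[T_w](T_{w_\mu}T_w)$. As a sanity check, at $v=1$ only $\mu=(1^n)$ survives and the identity collapses to $|\mc D_\alpha| = n!/(\alpha_1!\cdots\alpha_\ell!) = b_{\alpha,(1^n)}$, which is correct. For general $\mu$, I expect the surviving walks to biject, after summing over $w \in \mc D_\alpha$, with 0-1 matrices having row sums $\alpha$ and column sums $\mu$, with cycles of $w_\mu$ indexing columns and parts of $\alpha$ indexing rows, and with the Bruhat-enforced total length drop $n-\ell(\mu)$ being exactly the number of length-preserving moves, yielding the factor $(v-1)^{n-\ell(\mu)}$.

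Part (2) then follows formally from the classical symmetric-function identity $M(e,m)_{\lambda\mu} = b_{\lambda\mu}$ (see \cite{Mac}), which identifies the matrix of 0-1 matrix counts with the transition from elementary to monomial symmetric functions. Restricting part (1) to $\alpha \in \mc P_n$ and assembling the equations into matrix form exhibits the transition matrix from $\{\mc N_\lambda(1)\}$ to $\{f_\mu^*\}$ as $M(e,m)\cdot D$. Since $M(e,m)$ is unitriangular with respect to dominance order on partitions (hence invertible over $\Z$) and $D$ is diagonal with entries $(v-1)^{n-\ell(\mu)}$ invertible in $\C(v)$, the product is invertible, simultaneously yielding the basis property of $\{\mc N_\lambda(1) : \lambda \in \mc P_n\}$ and the stated form of the transition matrix.

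The principal obstacle is the combinatorial identity in part (1): for an individual $w \in \mc D_\alpha$ the coefficient $[T_w](T_{w_\mu}T_w)$ is a complicated polynomial in $v$ with no evident factorization, and the clean product $(v-1)^{n-\ell(\mu)}\, b_{\alpha\mu}$ emerges only after summing over the entire coset set $\mc D_\alpha$. Producing an explicit bijection between the contributing Bruhat walks and 0-1 matrices with the correct $v$-weight requires delicate bookkeeping; if this direct route proves intractable, a fallback is to adapt Lascoux's generating-function argument in the $\lambda$-ring formalism from \cite{La}.
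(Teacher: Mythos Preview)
The paper does not prove this proposition: it is quoted from Lascoux \cite[Theorems~6 and~7]{La}, and the only argument the paper supplies is the remark that part~(2) follows immediately from part~(1). Your derivation of~(2) from~(1) via the identity $M(e,m)_{\lambda\mu}=b_{\lambda\mu}$ and the invertibility of $M(e,m)\cdot D$ is exactly this ``follows immediately'' step, so on~(2) you match the paper.

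For~(1), your reduction is correct: the dual-basis identity $\tau(f_\lambda^*T_{w_\mu})=f_{w_\mu,\lambda}=\delta_{\lambda\mu}$ holds because the $T_{w_\lambda}$ form a basis of the cocenter, and the subsequent manipulation with cyclicity of $\tau$ to reach
\[
a_\mu=\sum_{w\in\mc D_\alpha}[T_w](T_{w_\mu}T_w)
\]
is valid. But the remaining combinatorial identity is, as you acknowledge, not proved. The heuristic about Bruhat walks is not yet an argument: each application of $T_i^2=(v-1)T_i+v$ branches, individual coefficients $[T_w](T_{w_\mu}T_w)$ are not monomials in $v-1$, and your claim that the ``total length drop $n-\ell(\mu)$ is exactly the number of length-preserving moves'' is imprecise (the $v$-branch drops length by two, and cancellations among walks must be controlled). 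No bijection is exhibited. Since the paper itself defers entirely to Lascoux's proof---which, as noted later in the paper, proceeds via noncommutative symmetric functions and $\lambda$-ring techniques rather than a direct walk bijection---your proposal would constitute a genuinely different and more elementary argument if completed, but as written the core step is a gap.
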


For a one-part partition $\la=(n)$, we shall simplify notation $\mc N_{(n)}(1)$
by $\mc N_n(1)$, and so on.
Let $w_n^0$ be the longest element in $\mf S_n$.
Recall the notation $\widetilde{T}_w$
from \eqref{eq:Ttilde}.

\begin{remark}
Part (1) of Proposition~\ref{prop:Lascoux} is a special case of
\cite[Theorem 6]{La} and (2) follows immediately from (1).
Another interesting special case of~\cite[Theorem 6]{La} states that
\begin{equation}\label{eq:T2}
\mc N_n(\widetilde{T}^2_{w^0_n})=\sum_{\mu\in\mc P_n}(v-1)^{n-\ell(\mu)}f_\mu^*.
\end{equation}
\end{remark}
\subsection{The algebra $\mc Z$}

Consider the direct sum of the centers
\begin{equation}
\mc Z:=\oplus_{n\geq 0}\mc Z(\HC).
\end{equation}
We define a product $\circ$ on $\mc Z$ via the relative norm~\eqref{eq:norm} as follows:
for $z_1\in\mc Z(\mc H_m)$ and $z_2\in\mc Z(\mc H_n)$, set
$$
z_1\circ z_2=\mc N_{(m,n)}(z_1\otimes z_2),
$$
where $z_1\otimes z_2$ is viewed as an element in the algebra $\mc H_m\otimes\mc H_n\cong\mc H_{(m,n)}$
associated to the composition $(m,n)$ of $m+n$.

\begin{lemma}\cite[Lemma 2.12]{Jo}
The following holds:
\begin{equation} \label{eq:assoc}
\mc N_{(n_1+n_2,n_3)}(\mc N_{(n_1,n_2)}(h_1\otimes h_2)\otimes h_3)
=\mc N_{(n_1,n_2,n_3)}(h_1\otimes h_2\otimes h_3)
\end{equation}
for $n_i\geq 0$ and $h_i\in\mc Z(\mc H_{n_i})$ with $1\leq i\leq 3$.
\end{lemma}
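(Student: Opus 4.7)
The plan is to unwind both sides of \eqref{eq:assoc} using the definition \eqref{eq:norm} and reduce the identity to a parabolic factorization of minimal length coset representatives. Set $m=n_1+n_2$, so that we have a tower of Young subgroups $\mf S_{(n_1,n_2,n_3)} \subset \mf S_{(m,n_3)} \subset \mf S_{n_1+n_2+n_3}$. The combinatorial input I will rely on is the standard tower factorization of Coxeter cosets: every $w\in\mc D_{(n_1,n_2,n_3)}$ factors uniquely as $w=uy$ with $u\in\mc D_{(m,n_3)}$ and $y$ a minimal length coset representative of $\mf S_{(n_1,n_2,n_3)}$ in $\mf S_{(m,n_3)}$, together with length additivity $\ell(w)=\ell(u)+\ell(y)$. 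Since $\mf S_{(n_1,n_2,n_3)}=\mf S_{(n_1,n_2)}\times\mf S_{n_3}$ sits inside $\mf S_{(m,n_3)}=\mf S_m\times\mf S_{n_3}$ with the $\mf S_{n_3}$ factor shared, such $y$ identify canonically with elements of $\mc D_{(n_1,n_2)}\subset\mf S_m$, acting trivially on the last $n_3$ letters.

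Given this, I would expand the LHS of \eqref{eq:assoc} as
$$\sum_{u\in\mc D_{(m,n_3)}}\sum_{y\in\mc D_{(n_1,n_2)}} v^{-\ell(u)-\ell(y)}\, T_u\bigl(T_y(h_1\otimes h_2)T_{y^{-1}}\otimes h_3\bigr)T_{u^{-1}}.$$
Because $y\in\mf S_m$ fixes the last $n_3$ letters, $T_y$ commutes with $h_3$ inside the ambient algebra $\mc H_{n_1+n_2+n_3}$, so the inner bracket equals $T_y(h_1\otimes h_2\otimes h_3)T_{y^{-1}}$. The length additivity $\ell(uy)=\ell(u)+\ell(y)$ then yields the braid-free products $T_uT_y=T_{uy}$ and $T_{y^{-1}}T_{u^{-1}}=T_{(uy)^{-1}}$; reindexing the double sum by $w=uy$ turns it into $\sum_{w\in\mc D_{(n_1,n_2,n_3)}} v^{-\ell(w)}\, T_w(h_1\otimes h_2\otimes h_3)T_{w^{-1}}$, which is precisely the RHS.

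The main obstacle is establishing the tower factorization together with length additivity. This is classical Coxeter theory: one first characterizes $\mc D_\beta$ as those $w\in\mf S_n$ satisfying $\ell(ws)>\ell(w)$ for every simple reflection $s$ of $\mf S_\beta$, and then a short induction on $\ell(w)$ using the exchange condition produces both the unique factorization $w=uy$ and the identity $\ell(w)=\ell(u)+\ell(y)$. Once this input is in place, the remainder of the proof is purely formal; notice in particular that centrality of the $h_i$ is never used in the manipulation, so the associativity of $\mc N$ in fact holds for arbitrary tensor arguments, and the centrality hypothesis only enters through Jones' proposition above to guarantee that the output lies in $\mc Z(\HC)$.
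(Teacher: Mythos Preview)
The paper does not supply its own proof of this lemma; it simply quotes it from Jones \cite[Lemma~2.12]{Jo}. Your argument is correct and is the standard route: transitivity of the relative norm follows from the parabolic tower factorization $\mc D_{(n_1,n_2,n_3)}=\mc D_{(m,n_3)}\cdot\bigl(\mc D_{(n_1,n_2)}\times\{1\}\bigr)$ together with length additivity, which in the Hecke algebra yields $T_uT_y=T_{uy}$ and $T_{y^{-1}}T_{u^{-1}}=T_{(uy)^{-1}}$. Your closing remark that centrality of the $h_i$ plays no role in the identity itself, only in guaranteeing that the output is central, is also accurate.
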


\begin{theorem}\label{thm:algebraZ}
The algebra $(\mc Z,\circ)$ is commutative, associative, and is
isomorphic to a polynomial algebra with generators $\mc N_n(1),$ for $n\geq 1$.
\end{theorem}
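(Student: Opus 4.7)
The plan is to establish the three assertions in order: associativity, then commutativity of the generators $\mc N_k(1)$, then polynomial generation, with full commutativity of $(\mc Z,\circ)$ emerging as a consequence. Associativity is almost formal: for $z_i\in\mc Z(\mc H_{n_i})$ one has $(z_1\circ z_2)\circ z_3=\mc N_{(n_1+n_2,n_3)}(\mc N_{(n_1,n_2)}(z_1\otimes z_2)\otimes z_3)$, which by the iterated-norm identity \eqref{eq:assoc} equals $\mc N_{(n_1,n_2,n_3)}(z_1\otimes z_2\otimes z_3)$; the mirror identity $\mc N_{(n_1,n_2+n_3)}(z_1\otimes \mc N_{(n_2,n_3)}(z_2\otimes z_3))=\mc N_{(n_1,n_2,n_3)}(z_1\otimes z_2\otimes z_3)$, proved by the same coset-splitting argument, handles $z_1\circ(z_2\circ z_3)$ symmetrically.

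For commutativity of the generators, I would apply Proposition~\ref{prop:Lascoux}(1) to the two compositions $(m,n)$ and $(n,m)$. Swapping the two rows of a $0$-$1$ matrix gives a column-sum-preserving bijection between matrices with row sums $(m,n)$ and those with row sums $(n,m)$, so $b_{(m,n)\mu}=b_{(n,m)\mu}$ for every $\mu\in\mc P_{m+n}$, and hence $\mc N_{(m,n)}(1)=\mc N_{(n,m)}(1)$ in $\mc Z(\mc H_{m+n})$. Combined with \eqref{eq:assoc} applied to trivial arguments, this yields
\[
\mc N_m(1)\circ\mc N_n(1)=\mc N_{(m,n)}(1)=\mc N_{(n,m)}(1)=\mc N_n(1)\circ\mc N_m(1).
\]

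For the polynomial structure, iterating \eqref{eq:assoc} with all arguments trivial shows $\mc N_\la(1)=\mc N_{\la_1}(1)\circ\mc N_{\la_2}(1)\circ\cdots\circ\mc N_{\la_\ell}(1)$ for every partition $\la=(\la_1,\ldots,\la_\ell)\in\mc P_n$. By Proposition~\ref{prop:Lascoux}(2) the family $\{\mc N_\la(1)\}$ spans $\mc Z(\HC)$, so $\mc Z$ is generated as a $\C(v)$-algebra by the pairwise-commuting elements $\{\mc N_k(1)\mid k\geq 1\}$. There is therefore a surjective graded algebra homomorphism from the polynomial algebra $\C(v)[X_1,X_2,\ldots]$ (with $\deg X_k=k$) onto $(\mc Z,\circ)$ sending $X_k\mapsto\mc N_k(1)$; both sides have graded $\C(v)$-dimension $|\mc P_n|$ in degree $n$, so this surjection is an isomorphism. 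Full commutativity of $(\mc Z,\circ)$ is then automatic, inherited from the polynomial algebra.

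I do not anticipate any serious obstacle: the genuinely difficult inputs---the Geck--Rouquier basis, Jones's centrality of the relative norm, and Lascoux's formula for $\mc N_\la(1)$ in terms of the $f_\mu^*$---are all quoted, and the argument merely stitches them together. The only subtle point worth highlighting is that commutativity of the generators is cheapest to read off from the combinatorial symmetry of Lascoux's coefficients $b_{\al\mu}$ in $\al$, rather than proved by a direct manipulation of coset sums inside $\HC$.
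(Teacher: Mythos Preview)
Your proposal is correct and follows essentially the same route as the paper's proof: associativity from the transitivity identity \eqref{eq:assoc}, commutativity of the generators from the symmetry $b_{(m,n)\mu}=b_{(n,m)\mu}$ in Proposition~\ref{prop:Lascoux}(1), and the polynomial structure from the fact that the $\mc N_\la(1)$ form a basis by Proposition~\ref{prop:Lascoux}(2). Your write-up is in fact slightly more careful than the paper's in one spot: you explicitly flag that associativity needs the mirror identity $\mc N_{(n_1,n_2+n_3)}(z_1\otimes \mc N_{(n_2,n_3)}(z_2\otimes z_3))=\mc N_{(n_1,n_2,n_3)}(z_1\otimes z_2\otimes z_3)$ in addition to the version stated in \eqref{eq:assoc}, whereas the paper simply asserts associativity ``by \eqref{eq:assoc}.''
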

\begin{proof}
By~\eqref{eq:assoc}, the product $\circ$ on $\mc Z$ is associative,  and
moreover
$$
\mc N_\la(1)=\mc N_{\la_1}(1)\circ\cdots\circ\mc N_{\la_\ell}(1)
$$
for partition $\la=(\la_1,\ldots,\la_\ell)$.
Then by Proposition~\ref{prop:Lascoux}(2) we deduce that
the algebra $\mc Z$ is generated by $\mc N_n(1), n\geq 1$.
Meanwhile by Proposition~\ref{prop:Lascoux}(1), we have
$\mc N_{(m,n)}(1)=\mc N_{(n,m)}(1)$
and hence $\mc N_m(1)\circ\mc N_n(1)=\mc N_n(1)\circ\mc N_m(1)$
for $m,n\geq 1$.
Thus $\mc Z$ is commutative.

Furthermore by Proposition~\ref{prop:Lascoux}(2), a basis of $\mc Z$
is given by the monomials in $\mc N_n(1)$ for $n\geq 1$, and
hence $\mc Z$ is a polynomial algebra generated by $\mc N_n(1),$ for $n\geq 1$.
\end{proof}

\subsection{The symmetric functions $\undm_\la$}

Recall that $\La$ denotes the ring of symmetric functions in variables
$x=\{x_1,x_2,\ldots\}$ over $\C$.
In $\La_v:=\C(v)\otimes_{\C}\La$, we define for $k\ge 1$ that
$$
p_k((v-1)x):=(v^k-1)p_k(x),
\qquad
p_k\Big(\frac{x}{v-1}\Big) :=\frac{p_k(x)}{v^k-1}.
$$
Then the $\la$-ring notation
$f((v-1)x)$ and $f(\frac{x}{v-1})$ are defined in $\La_v$ accordingly
for any symmetric function $f(x)$ by first expressing $f$ in terms of
power-sums.
For $\la\in\mc P_n$, set
\begin{equation} \label{eq:mbar}
\undm_{\la}(x) :=(v-1)^{\ell(\la)}m_{\la}\Big(\frac{x}{v-1}\Big).
\end{equation}

\begin{example}
Since $\ds m_{(1,1)}=\frac{1}{2}p_{(1,1)}-\frac{1}{2}p_2$
and $m_2=p_2$, we obtain
$\undm_{\mu}$ for $\mu=(1,1)$ and $\mu=(2)$
as follows:
\begin{align*}
\undm_{(1,1)}(x)
&= \frac{1}{2}(v-1)^2\Big(p_{(1,1)}\Big(\frac{x}{v-1}\Big)
-p_2\Big(\frac{x}{v-1}\Big)\Big)\\
&=\frac{1}{2}p_1(x)^2-\frac{v-1}{2(v+1)}p_2(x),  \\
\undm_{(2)}(x)
&=(v-1)p_2\Big(\frac{x}{v-1}\Big)
=\frac{1}{v+1}p_2(x).
\end{align*}
\end{example}
\subsection{The Frobenius map $\Psi$}

\begin{definition}
Define the (quantum) Frobenius  map to be the linear map
\begin{align}
\Psi: \mc Z &\longrightarrow \La_v,  \notag\\
f^*_{\la}&\mapsto
\undm_{\la}(x)
\label{Psi}, \quad
\text{ for all partitions } \la.
\end{align}
\end{definition}
%

\begin{proposition}\label{prop:N1}
The following holds for $\la\in\mc P_n$:
$$
\Psi(\mc N_{\la}(1))
=(v-1)^n e_{\la}\Big(\frac{x}{v-1}\Big).
$$
\end{proposition}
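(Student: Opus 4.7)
The plan is to combine the expansion of $\mc N_\la(1)$ in the Geck--Rouquier basis given by Proposition~\ref{prop:Lascoux}(1) with the classical identity that expresses $e_\la$ in terms of the monomial symmetric functions via the very same combinatorial coefficients $b_{\la\mu}$.

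First I would apply $\Psi$ termwise to the identity
$$
\mc N_\la(1) = \sum_{\mu \in \mc P_n} (v-1)^{n-\ell(\mu)} b_{\la\mu}\, f^*_\mu
$$
from Proposition~\ref{prop:Lascoux}(1). Using the definition $\Psi(f^*_\mu) = \undm_\mu(x) = (v-1)^{\ell(\mu)} m_\mu\!\bigl(\tfrac{x}{v-1}\bigr)$ from \eqref{eq:mbar}, the powers of $(v-1)^{n-\ell(\mu)}$ and $(v-1)^{\ell(\mu)}$ combine, and a global factor of $(v-1)^n$ can be pulled out of the sum, yielding
$$
\Psi(\mc N_\la(1)) = (v-1)^n \sum_{\mu\in\mc P_n} b_{\la\mu}\, m_\mu\!\Big(\tfrac{x}{v-1}\Big).
$$

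The only remaining ingredient is the classical fact (see Macdonald, \cite[I.(6.7)]{Mac}) that the transition matrix $M(e,m)$ satisfies $M(e,m)_{\la\mu} = b_{\la\mu}$, namely
$$
e_\la(y) = \sum_{\mu\in\mc P_n} b_{\la\mu}\, m_\mu(y),
$$
where $b_{\la\mu}$ counts $0$--$1$ matrices with row sums $\la$ and column sums $\mu$. Specializing $y = \tfrac{x}{v-1}$ in the $\la$-ring sense and substituting into the previous display gives exactly $\Psi(\mc N_\la(1)) = (v-1)^n\, e_\la\!\bigl(\tfrac{x}{v-1}\bigr)$.

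There is no real obstacle: the argument is a one-line manipulation once one recognizes that the matrix $M(e,m)$ appearing in Proposition~\ref{prop:Lascoux}(2) is the same as Lascoux's matrix of $0$--$1$ counts $b_{\la\mu}$. The cleanest presentation is simply to record both facts and compose them.
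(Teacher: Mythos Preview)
Your proposal is correct and follows essentially the same route as the paper's proof: expand $\mc N_\la(1)$ in the Geck--Rouquier basis via Lascoux's result, apply $\Psi$ termwise using $\Psi(f^*_\mu)=(v-1)^{\ell(\mu)}m_\mu\!\bigl(\tfrac{x}{v-1}\bigr)$, and recognize the resulting sum as $e_\la$ evaluated at $\tfrac{x}{v-1}$. The only cosmetic difference is that you cite Proposition~\ref{prop:Lascoux}(1) directly while the paper cites part~(2), but these are the same statement in different notation.
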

\begin{proof}
In the notation of Proposition~\ref{prop:Lascoux}, we write $e_{\la}(x)=\sum_{\mu}b_{\la\mu}m_{\mu}(x)$.
By Proposition~\ref{prop:Lascoux}(2) we obtain
$$
\mc N_{\la}(1)=\sum_{\mu\in\mc P_n}b_{\la\mu}(v-1)^{n-\ell(\mu)}f^*_{\mu}.
$$
We compute that
\begin{align*}
\Psi(\mc N_{\la}(1))
&=\sum_{\mu\in\mc P_n}b_{\la\mu}(v-1)^{n-\ell(\mu)}\Psi(f^*_{\mu})\\
&=\sum_{\mu\in\mc P_n}b_{\la\mu}(v-1)^{n-\ell(\mu)}
\undm_{\mu}(x)\\
&=(v-1)^n\sum_{\mu\in\mc P_n}b_{\la\mu}m_{\mu}\Big(\frac{x}{v-1}\Big)\\
&=(v-1)^n e_{\la}\Big(\frac{x}{v-1}\Big).
\end{align*}
The proposition is proved.
\end{proof}

\begin{theorem}\label{thm:Psi}
The Frobenius map $\Psi:\mc Z\longrightarrow \La_v$ is an algebra isomorphism.
\end{theorem}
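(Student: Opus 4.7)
The plan is to leverage Theorem~\ref{thm:algebraZ}, which realizes $\mc Z$ as a polynomial algebra on the generators $\mc N_n(1)$, $n\ge 1$, together with Proposition~\ref{prop:N1}, which computes their images under $\Psi$. First, I would verify that $\Psi$ is a graded $\C(v)$-linear isomorphism: by definition it sends the Geck--Rouquier basis $\{f^*_\la\}_{\la\in\mc P_n}$ of $\mc Z(\HC)$ to $\{\undm_\la\}_{\la\in\mc P_n}\subset\La_v^n$, and the latter is a basis of $\La_v^n$ because the $\la$-ring substitution $x\mapsto x/(v-1)$, determined by $p_k\mapsto p_k/(v^k-1)$, is a $\C(v)$-algebra automorphism of $\La_v$ preserving polynomial degree; hence it carries the basis $\{m_\la\}_{\la\in\mc P_n}$ of $\La_v^n$ to a basis, after which rescaling by the nonzero scalars $(v-1)^{\ell(\la)}$ produces $\{\undm_\la\}_{\la\in\mc P_n}$.

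Next I would prove multiplicativity. By linearity it suffices to verify $\Psi(z\circ z')=\Psi(z)\Psi(z')$ on a spanning set of $\mc Z$, and I would use the basis $\{\mc N_\la(1)\}_\la$ arising from Theorem~\ref{thm:algebraZ}. For partitions $\la,\mu$, the factorization $\mc N_\la(1)=\mc N_{\la_1}(1)\circ\cdots\circ\mc N_{\la_\ell}(1)$ noted in the proof of Theorem~\ref{thm:algebraZ} yields $\mc N_\la(1)\circ\mc N_\mu(1)=\mc N_{\la\cup\mu}(1)$, where $\la\cup\mu$ denotes the partition obtained by concatenating the parts. Proposition~\ref{prop:N1} combined with the multiplicativity $e_{\la\cup\mu}=e_\la e_\mu$ in $\La$ then gives
$$
\Psi(\mc N_\la(1)\circ\mc N_\mu(1))=(v-1)^{|\la|+|\mu|}e_{\la\cup\mu}\Big(\frac{x}{v-1}\Big)=\Psi(\mc N_\la(1))\cdot\Psi(\mc N_\mu(1)),
$$
which is the required multiplicativity.

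Combining the two steps, $\Psi$ is a $\C(v)$-algebra homomorphism and a bijection, hence an algebra isomorphism. I do not anticipate any serious obstacle: the heavy lifting has already been done in Theorem~\ref{thm:algebraZ} (which rests on Lascoux's Proposition~\ref{prop:Lascoux}) and in Proposition~\ref{prop:N1}, and the present argument simply observes that under $\Psi$ the multiplicative generators $\{\mc N_n(1)\}_{n\ge 1}$ of $\mc Z$ are matched with the algebraically independent generators $\{(v-1)^n e_n(x/(v-1))\}_{n\ge 1}$ of $\La_v$ (algebraically independent because they are images of the $e_n$ under the automorphism above, up to nonzero scalars), so the polynomial algebra structures on the two sides correspond.
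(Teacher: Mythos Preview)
Your proposal is correct and follows essentially the same route as the paper: first observe that $\Psi$ is a graded linear isomorphism because it carries the Geck--Rouquier basis to the basis $\{\undm_\la\}$, then check multiplicativity on the basis $\{\mc N_\la(1)\}$ by using the factorization $\mc N_\la(1)\circ\mc N_\mu(1)=\mc N_{\la\cup\mu}(1)$ (which implicitly uses the commutativity from Theorem~\ref{thm:algebraZ}) together with Proposition~\ref{prop:N1} and $e_{\la\cup\mu}=e_\la e_\mu$. The only cosmetic difference is that you spell out why $\{\undm_\la\}$ is a basis via the $\lambda$-ring automorphism $x\mapsto x/(v-1)$, whereas the paper simply asserts it.
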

\begin{proof}
Clearly $\Psi$ is a linear isomorphism
since $\{f_\la^*~|\la\in\mc P_n\}$ and $\{\undm_\la(x)~|\la\in\mc P_n\}$
are $\C(v)$-linear basis of $\mc Z(\HC)$ and $\La^n_v$, respectively, for $n\geq 0$.

It remains to show that $\Psi$ is
an algebra homomorphism.
Suppose $\mu=(\mu_1,\ldots,\mu_k)\in\mc P_m$ and
$\la=(\la_1,\ldots,\la_\ell)\in\mc P_n$.
Then by~\eqref{eq:assoc} we have
\begin{align*}
\mc N_\mu(1)\circ\mc N_\la(1)
&=\mc N_{(m,n)}(\mc N_\mu(1)\otimes\mc N_\la(1))\\
&=\mc N_{\mu_1}(1)\circ\cdots\circ\mc N_{\mu_k}(1)
\circ\mc N_{\la_1}(1)\circ\cdots\circ\mc N_{\la_\ell}(1)\\
&=\mc N_{\mu\cup\la}(1),
\end{align*}
where $\mu\cup\la$ is the partition corresponding
to the composition obtained by concatenating the parts of $\mu$ and $\la$,
and the last equality is due to the commutativity of $\mc Z$ by Theorem~\ref{thm:algebraZ}.
Then by Proposition~\ref{prop:N1} one deduces that
\begin{align*}
\Psi(\mc N_\mu(1)\circ\mc N_\la(1))
&=(v-1)^{m+n}e_{\mu\cup\la}\Big(\frac{x}{v-1}\Big)\\
&=(v-1)^m e_\mu\Big(\frac{x}{v-1}\Big)
\cdot (v-1)^ne_\la\Big(\frac{x}{v-1}\Big)\\
&=\Psi(\mc N_\mu(1))\Psi(\mc N_\la(1)).
\end{align*}
This proves the theorem.
\end{proof}

\begin{remark}
We mention that there is another connection in a different direction
between the center  $\mathcal Z(\HC)$ and
the ring of symmetric functions, which has yet to be developed \cite{FW}.
In the $v=1$ case, such a connection was developed in \cite[I.
7, Ex.~25]{Mac}, which
was in turn based on
a classical work of Farahat-Higman.
\end{remark}

\section{Identification of bases of $\mc Z$ and $\La_v$}
\label{sec:identification}
\subsection{Identification of bases}
Given a partition $\la=(\la_1,\ldots, \la_{\ell})$,
let $w^0_{\la}$ be the longest element in the Young subgroup
$\mf S_{\la}$,
and set
$$
F_{\la}:=f^*_{\la_1}\otimes f^*_{\la_2}\otimes\cdots\otimes f^*_{\la_{\ell}}
\in\mc H_{\la},
$$
where $f^*_{\la_i}$ is the Geck-Rouquier class element in the $i$th factor $\mc H_{\la_i}$ of $\mc H_\la$
associated to the one-part partition $(\la_i)$ for $1\leq i\leq\ell$.
The element $F_{\la}$ is related to the element
$e_{\la^\natural}=\Ga_{\la_1}\otimes\Ga_{\la_2}\ldots\otimes\Ga_{\la_{\ell}}$
in \cite{La} for $\la=(\la_1,\ldots,\la_\ell)$  via
$F_{\la}=v^{-\frac{n-\ell(\la)}{2}}e_{\la^\natural}.$

\begin{proposition}
\label{prop:NFT2} Suppose $\la$ is a partition of $n$. Then
\begin{enumerate}
\item
$\ds \Psi(\mc N_{\la}\big(\widetilde{T}^2_{w^0_{\la}})\big) =(v-1)^n
h_{\la}\Big(\frac{x}{v-1}\Big),$

\item
$
\ds \Psi(\mc N_{\la}(F_{\la}))
=(v-1)^{\ell(\la)}p_{\la}\Big(\frac{x}{v-1}\Big).
$
\end{enumerate}
\end{proposition}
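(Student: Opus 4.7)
The plan is to exploit Theorem~\ref{thm:Psi} (that $\Psi$ is an algebra isomorphism) in order to reduce both identities to the one-part case $\la=(n)$. The bridge is the iterated form of Jones's associativity \eqref{eq:assoc}: whenever each $h_i$ is central in $\mc H_{\la_i}$, one has
\[
\mc N_\la(h_1\otimes\cdots\otimes h_\ell)=\mc N_{\la_1}(h_1)\circ\cdots\circ\mc N_{\la_\ell}(h_\ell).
\]
Both $F_\la=f^*_{\la_1}\otimes\cdots\otimes f^*_{\la_\ell}$ and $\widetilde T^2_{w^0_\la}=\widetilde T^2_{w^0_{\la_1}}\otimes\cdots\otimes\widetilde T^2_{w^0_{\la_\ell}}$ are tensor products of central elements in the respective factors, so applying $\Psi$ converts each relative norm into a product of its $\Psi$-values over the parts of $\la$.

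For part (1), I would apply $\Psi$ directly to the single-part identity \eqref{eq:T2}. Using $\undm_\mu(x)=(v-1)^{\ell(\mu)}m_\mu(x/(v-1))$,
\[
\Psi\bigl(\mc N_n(\widetilde T^2_{w^0_n})\bigr)=\sum_{\mu\in\mc P_n}(v-1)^{n-\ell(\mu)}\undm_\mu(x)=(v-1)^n\sum_{\mu\in\mc P_n}m_\mu\!\left(\tfrac{x}{v-1}\right)=(v-1)^n h_n\!\left(\tfrac{x}{v-1}\right),
\]
invoking the classical identity $h_n=\sum_{\mu\vdash n}m_\mu$. Combined with $h_\la=\prod_i h_{\la_i}$ and the reduction above, this yields the full formula in~(1).

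For part (2), the one-part case is essentially trivial: $\mc D_{(n)}=\{1\}$ forces $\mc N_n(f^*_n)=f^*_n$, so
\[
\Psi(\mc N_n(f^*_n))=\undm_n(x)=(v-1)\,m_n\!\left(\tfrac{x}{v-1}\right)=(v-1)\,p_n\!\left(\tfrac{x}{v-1}\right),
\]
using $m_n=p_n$ for a single-part partition. The multiplicativity $p_\la=\prod_i p_{\la_i}$ (automatic since $\la$-ring substitution is a ring homomorphism) then gives the general identity in~(2) after applying $\Psi$ to the factored norm.

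The argument is essentially formal once the single-part cases are established. The only substantive input beyond the paper's previously proved results is the standard fact that $\widetilde T^2_{w^0_{\la_i}}$ lies in the center of $\mc H_{\la_i}$, which is needed to legitimize the iterated use of \eqref{eq:assoc}; this is classical and poses no real obstacle. In summary, the heavy lifting has been done already by Theorem~\ref{thm:Psi} and equation \eqref{eq:T2}, and the proposition amounts to combining them.
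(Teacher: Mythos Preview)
Your proof is correct and follows essentially the same route as the paper: reduce to the one-part case via the factorization $\mc N_\la(h_1\otimes\cdots\otimes h_\ell)=\mc N_{\la_1}(h_1)\circ\cdots\circ\mc N_{\la_\ell}(h_\ell)$ together with Theorem~\ref{thm:Psi}, then handle $n=\la_i$ using \eqref{eq:T2} and $h_n=\sum_{\mu\vdash n}m_\mu$ for part~(1), and using $\mc N_n(f^*_n)=f^*_n$ with $m_n=p_n$ for part~(2). Your explicit mention that the centrality of $\widetilde T^2_{w^0_{\la_i}}$ in $\mc H_{\la_i}$ is required for the iterated associativity is a point the paper leaves implicit.
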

\begin{proof}
Assume $\la=(\la_1,\la_2,\ldots,\la_\ell)$
with $\ell(\la)=\ell$.
By~\eqref{eq:T2}, one deduces that, for $n\geq 1$,
\begin{align}
\Psi(\mc N_{n}\big(\widetilde{T}^2_{w^0_n})\big)
&=\sum_{\mu\in\mc P_n}(v-1)^{n-\ell(\mu)}\Psi(f^*_\mu)\notag\\
&=\sum_{\mu\in\mc P_n}(v-1)^n m_\mu\Big(\frac{x}{v-1}\Big)\notag\\
&=(v-1)^nh_n\Big(\frac{x}{v-1}\Big), \label{eq:NT21}
\end{align}
where the last equality is due to fact that $h_n=\sum_{\mu\in\mc P_n}m_\mu$.
Observe that $\widetilde{T}^2_{w^0_{\la}}$ can be written as
$
\widetilde{T}^2_{w^0_{\la}}=\widetilde{T}^2_{w^0_{\la_1}}\otimes\cdots\otimes
\widetilde{T}^2_{w^0_{\la_\ell}}
$
and hence
$$
\mc N_\la(\widetilde{T}^2_{w^0_{\la}})=\widetilde{T}^2_{w^0_{\la_1}}\circ\cdots\circ
\widetilde{T}^2_{w^0_{\la_\ell}}.
$$
Therefore, by Theorem~\ref{thm:Psi} and~\eqref{eq:NT21}, we calculate
\begin{align*}
\Psi(\mc N_\la(\widetilde{T}^2_{w^0_{\la}})))
&=\Psi(\widetilde{T}^2_{w^0_{\la_1}})\cdots
\Psi(\widetilde{T}^2_{w^0_{\la_\ell}})\\
&=(v-1)^{\la_1}h_{\la_1}\Big(\frac{x}{v-1}\Big)\cdots
(v-1)^{\la_\ell}h_{\la_\ell}\Big(\frac{x}{v-1}\Big)\\
&=(v-1)^nh_\la\Big(\frac{x}{v-1}\Big).
\end{align*}
This proves Part (1) of the proposition.

By definition, we have
$$\mc N_\la(F_\la)=f^*_{\la_1}\circ\cdots\circ f^*_{\la_\ell}.
$$
Note that $m_k=p_k$ for $k\ge 1$.
Then by Theorem~\ref{thm:Psi}, we compute that
\begin{align*}
\Psi(\mc N_\la(F_\la))
&=\Psi(f^*_{\la_1})\cdots\Psi(f^*_{\la_\ell})\\
&=(v-1)m_{\la_1}\Big(\frac{x}{v-1}\Big)\cdots(v-1)m_{\la_\ell}\Big(\frac{x}{v-1}\Big)\\
&=(v-1)^{\ell(\la)}p_\la\Big(\frac{x}{v-1}\Big).
\end{align*}
Hence Part (2) of the proposition is proved.
\end{proof}
\begin{remark}
$\widetilde{T}^2_{w^0_n}$ is the product of
all ``multiplicative'' Jucys-Murphy elements in $\HC$ (cf. \cite{La}).
Meanwhile  $F_n=f_n^*\in\mc Z(\HC)$
is the product of all  ``additive''   Jucys-Murphy
elements in $\HC$~\cite{Jo} (also see \cite{La}).
\end{remark}
Recall the matrix $D$ from~\eqref{eq:D}.

\begin{corollary}\label{cor:NF}
The following holds for $n\geq 0$:
\begin{enumerate}

\item The set $\{\mc N_{\la}(\widetilde{T}^2_{w^0_{\la}})~|\la\in\mc P_n\}$
is a basis of $\mc Z(\HC)$ and the transition matrix from this basis to
$\{f^*_{\la}~|\la\in\mc P_n\}$ is equal to the product
$$
M(h,m)\cdot D.
$$

\item The set $\{\mc N_{\la}(F_{\la})~|\la\in\mc P_n\}$ is a basis of
$\mc Z(\HC)$ and the transition matrix from this basis to
$\{f^*_{\la}~|\la\in\mc P_n\}$ is equal to the product
$$
D^{-1}\cdot M(p,m)\cdot D.
$$
\end{enumerate}
\end{corollary}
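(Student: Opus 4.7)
The plan is to deduce both parts of the corollary as essentially formal consequences of Theorem~\ref{thm:Psi} and Proposition~\ref{prop:NFT2}, by transferring identities in $\La_v^n$ into identities in $\mc Z(\HC)$ via the linear isomorphism $\Psi^{-1}$, while keeping careful track of the powers of $(v-1)$.

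For the basis claims in (1) and (2): The families $\{h_\la \mid \la\in\mc P_n\}$ and $\{p_\la \mid \la\in\mc P_n\}$ are $\C(v)$-bases of $\La_v^n$. The substitution $f(x)\mapsto f(x/(v-1))$ is an invertible operation on $\La_v^n$, and multiplication by a nonzero scalar in $\C(v)$ preserves bases, so $\{(v-1)^n h_\la(x/(v-1))\}$ and $\{(v-1)^{\ell(\la)} p_\la(x/(v-1))\}$ remain bases of $\La_v^n$. By Proposition~\ref{prop:NFT2} these are precisely the images of $\{\mc N_\la(\widetilde{T}^2_{w^0_\la})\}$ and $\{\mc N_\la(F_\la)\}$ under $\Psi$, and since $\Psi$ is a $\C(v)$-linear isomorphism by Theorem~\ref{thm:Psi}, the preimages are bases of $\mc Z(\HC)$.

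For the transition matrices, I would start from the defining expansions $h_\la = \sum_\mu M(h,m)_{\la\mu} m_\mu$ and $p_\la = \sum_\mu M(p,m)_{\la\mu} m_\mu$ in $\La^n$. Substituting $x\mapsto x/(v-1)$ and multiplying by the appropriate $(v-1)$-power so as to recognize $\undm_\mu(x) = (v-1)^{\ell(\mu)} m_\mu(x/(v-1))$ on the right, one gets
\begin{align*}
(v-1)^n h_\la\!\left(\tfrac{x}{v-1}\right)
&= \sum_\mu M(h,m)_{\la\mu}\,(v-1)^{n-\ell(\mu)}\, \undm_\mu(x),\\
(v-1)^{\ell(\la)} p_\la\!\left(\tfrac{x}{v-1}\right)
&= \sum_\mu M(p,m)_{\la\mu}\,(v-1)^{\ell(\la)-\ell(\mu)}\, \undm_\mu(x).
\end{align*}
Applying $\Psi^{-1}$, the left-hand sides become $\mc N_\la(\widetilde{T}^2_{w^0_\la})$ and $\mc N_\la(F_\la)$, respectively, and $\undm_\mu(x)$ becomes $f^*_\mu$. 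Reading off the coefficients and recalling $D={\rm diag}\{(v-1)^{n-\ell(\mu)}\}$ from \eqref{eq:D} yields (1) immediately, while for (2) one rewrites $(v-1)^{\ell(\la)-\ell(\mu)} = (v-1)^{-(n-\ell(\la))}\cdot (v-1)^{n-\ell(\mu)}$ to recognize the conjugated product $D^{-1}\cdot M(p,m)\cdot D$.

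Nothing deep is required beyond the two inputs already in hand; the only real obstacle is the bookkeeping of $(v-1)$-factors, particularly the splitting in part (2) that produces the two diagonal factors flanking $M(p,m)$. I would present the computation for part (1) first, as the factor $(v-1)^{n-\ell(\mu)}$ appears naturally on one side, and then highlight the slight rearrangement needed to obtain the conjugated matrix in part (2).
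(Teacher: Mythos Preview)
Your proposal is correct and follows essentially the same approach as the paper's own proof: use Proposition~\ref{prop:NFT2} to identify the images under $\Psi$ with the rescaled $h_\la$- and $p_\la$-bases of $\La_v^n$, pull back via the linear isomorphism $\Psi$ to get bases of $\mc Z(\HC)$, and read off the transition matrices from the symmetric-function identities while tracking powers of $(v-1)$. The paper's proof is terser (it simply says Part~(1) follows by combining \eqref{Psi} and Proposition~\ref{prop:NFT2}(1), and Part~(2) is similar), but the content is the same.
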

\begin{proof}
Observe that  $\ds \Big\{(v-1)^nh_\la\Big(\frac{x}{v-1}\Big)~|\la\in\mc P_n\Big\}$ is a basis of $\La^n_v$.
Then by Proposition~\ref{prop:NFT2}(1), the set $\big\{\mc N_{\la}(\widetilde{T}^2_{w^0_{\la}})~|\la\in\mc P_n\big\}$
is a basis of $\mc Z(\HC)$. Part (1)
follows now by combining~\eqref{Psi} and Proposition~\ref{prop:NFT2}(1).
By a similar argument, Part (2) can be verified via Proposition~\ref{prop:NFT2}(2).
\end{proof}
\begin{remark}
Corollary~\ref{cor:NF} was originally proved by Lascoux~\cite[Theorem~ 10, Theorem~ 12]{La},
while the basis $\{\mc N_\la(F_\la)~|\la\in\mc P_n\}$ was constructed earlier by Jones~\cite{Jo}.
Lascoux's proof used ingenious connection to non-commutative symmetric functions
which is difficult for us to follow.
In his approach, Lascoux established the transition matrices in  the three cases as in Proposition~\ref{prop:Lascoux}
and Corollary~\ref{cor:NF} separately.
\end{remark}

\begin{corollary}
\begin{enumerate}
\item $\mc Z$ is a polynomial algebra with generators $\mc N_n(\widetilde{T}^2_{w^0_n}),$
for $n\geq 1$.

\item $\mc Z$ is a polynomial algebra with generators $\mc N_n(F_n),$ for  $n\geq 1$.
\end{enumerate}
\end{corollary}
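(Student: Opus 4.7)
The plan is to transport both claims through the algebra isomorphism $\Psi: \mc Z \to \La_v$ of Theorem~\ref{thm:Psi}. Since $\Psi$ is an isomorphism of graded $\C(v)$-algebras, a subset of $\mc Z$ is a set of algebraically independent polynomial generators if and only if its image under $\Psi$ is a set of polynomial generators of $\La_v$.

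For part (1), I would apply Proposition~\ref{prop:NFT2}(1) to see that $\Psi$ sends $\mc N_n(\widetilde{T}^2_{w^0_n})$ to $(v-1)^n h_n(x/(v-1))$. It is classical that the complete homogeneous symmetric functions $\{h_n \mid n \ge 1\}$ are algebraically independent polynomial generators of $\La$, hence of $\La_v = \C(v)\otimes_{\C}\La$. The $\lambda$-ring substitution $f(x)\mapsto f(x/(v-1))$ is by definition the $\C(v)$-algebra endomorphism of $\La_v = \C(v)[p_1,p_2,\ldots]$ sending $p_k$ to $p_k/(v^k-1)$; since each $v^k-1$ is a nonzero scalar in $\C(v)$, this endomorphism is in fact an automorphism. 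Hence $\{h_n(x/(v-1)) \mid n \ge 1\}$ is likewise a set of polynomial generators of $\La_v$, and rescaling each by $(v-1)^n \in \C(v)^{\times}$ preserves that property. Pulling back through $\Psi^{-1}$ gives (1).

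For part (2), by Proposition~\ref{prop:NFT2}(2), $\Psi$ sends $\mc N_n(F_n)$ to $(v-1)\,p_n(x/(v-1)) = \frac{v-1}{v^n-1}\,p_n(x)$. Since $\{p_n \mid n \ge 1\}$ are polynomial generators of $\La_v$ and each coefficient $\frac{v-1}{v^n-1}$ lies in $\C(v)^{\times}$, the images $\{\Psi(\mc N_n(F_n)) \mid n \ge 1\}$ are again polynomial generators of $\La_v$, and the claim follows by applying $\Psi^{-1}$.

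There is no substantive obstacle: once Theorem~\ref{thm:Psi} and Proposition~\ref{prop:NFT2} are in hand, the argument reduces to the standard fact that $\La_v$ is a polynomial algebra on either $\{h_n\}_{n\ge 1}$ or $\{p_n\}_{n\ge 1}$, together with the trivial observation that rescaling generators by nonzero elements of $\C(v)$ does not affect the property of being a set of algebraically independent polynomial generators.
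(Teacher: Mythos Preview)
Your proof is correct and is exactly the argument the paper has in mind: the corollary is stated without proof precisely because it follows immediately from Theorem~\ref{thm:Psi} and Proposition~\ref{prop:NFT2} together with the standard fact that $\{h_n\}_{n\ge 1}$ and $\{p_n\}_{n\ge 1}$ are polynomial generators of $\La_v$. Your explicit verification that the $\la$-ring substitution $x\mapsto x/(v-1)$ is a $\C(v)$-algebra automorphism of $\La_v$ (since each $v^k-1$ is a unit in $\C(v)$) and that scaling by units preserves generating sets is the only detail to check, and you have done so.
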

In the following, we summarize the identification of
various bases of $\mc Z$ and $\La_v$, for the convenience of the reader.

\begin{theorem}\label{PsiLascoux}
Suppose $\la\in\mc P_n$.
Then the following holds:
\begin{align}
\Psi(f^*_{\la}) &=(v-1)^{\ell(\la)}
m_{\la}\Big(\frac{x}{v-1}\Big),
 \label{eq:f}  \\
\Psi(\ee^\la_v) &=\frac{1}{\kappa_\la(v)}s_\la(x),\label{eq:e}\\
\Psi(\mc N_{\la}(1))
&=(v-1)^n e_{\la}\Big(\frac{x}{v-1}\Big),
  \label{eq:N1} \\
\Psi(\mc N_{\la}(\widetilde{T}^2_{w^0_{\la}}))
&=(v-1)^n h_{\la}\Big(\frac{x}{v-1}\Big),
  \label{eq:NT2} \\
\Psi(\mc N_{\la}(F_{\la}))
&=(v-1)^{\ell(\la)}p_{\la}\Big(\frac{x}{v-1}\Big).
 \label{eq:NF}
\end{align}
\end{theorem}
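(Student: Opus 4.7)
The theorem collects five identifications, four of which are already in hand from the preceding material: \eqref{eq:f} is an immediate transcription of the definition \eqref{Psi} of $\Psi$ together with \eqref{eq:mbar}; \eqref{eq:N1} is Proposition~\ref{prop:N1}; and \eqref{eq:NT2}, \eqref{eq:NF} are the two parts of Proposition~\ref{prop:NFT2}. Thus the only genuinely new content is \eqref{eq:e}, and my plan focuses there.

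The starting point is Lemma~\ref{idempotent}(2), which expands the primitive central idempotent in the Geck-Rouquier basis as
$$
\ee^\la_v = \frac{1}{\kappa_\la(v)}\sum_{\mu\in\mc P_n}\chi^\la_v(T_{w_\mu})\,f^*_\mu.
$$
Applying $\Psi$ termwise and invoking the already-proved \eqref{eq:f} reduces the claim to the symmetric-function identity
$$
s_\la(x) \;=\; \sum_{\mu\in\mc P_n}\chi^\la_v(T_{w_\mu})\,\undm_\mu(x).
$$
This is a $v$-deformation of the classical Frobenius character formula: a short power-sum computation shows that $\undm_\mu \to p_\mu/z_\mu$ at $v=1$, recovering the familiar $s_\la = \sum_\mu z_\mu^{-1}\chi^\la(w_\mu)\,p_\mu$.

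To prove the displayed identity I plan to invoke the Frobenius-type character formula of Ram~\cite{Ram}, which produces a family of symmetric functions $\{q_\mu(x;v)\}$ satisfying $q_\mu(x;v) = \sum_\la \chi^\la_v(T_{w_\mu})\,s_\la(x)$. Interpreting this as a matrix identity with the character matrix $A_{\la,\mu} = \chi^\la_v(T_{w_\mu})$, the desired identity is equivalent to saying that $\{\undm_\mu\}$ and $\{q_\mu\}$ are dual bases of $\La_v^n$ with respect to the Hall inner product extended $\C(v)$-linearly. Once this duality is in hand, pairing the expansion $s_\la = \sum_\mu a_\mu \undm_\mu$ against $q_\nu$ yields $a_\nu = \langle s_\la, q_\nu\rangle = \chi^\la_v(T_{w_\nu})$, which is exactly what is needed.

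The main obstacle will be this duality check. Ram's $q_\mu$ are built multiplicatively from quantum power-sums, while $\undm_\mu = (v-1)^{\ell(\mu)}m_\mu(x/(v-1))$ involves the $\la$-ring substitution $p_k(x/(v-1)) = p_k(x)/(v^k-1)$. My plan is to rewrite both families in the power-sum basis and verify the orthogonality diagonally using $\langle p_\mu, p_\nu\rangle = \delta_{\mu\nu}\,z_\mu$, with the $v$-dependent normalizations from the two sides cancelling cleanly. After that translation the rest is a routine computation, and the theorem follows by combining \eqref{eq:e} with the four already-established identities.
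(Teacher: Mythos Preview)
Your proposal is correct and follows essentially the same route as the paper. The paper likewise reduces \eqref{eq:e} (via Lemma~\ref{idempotent}(2) and the definition of $\Psi$) to the dual Frobenius formula $s_\la = \sum_\mu \chi^\la_v(T_{w_\mu})\,\undm_\mu$, and proves that by showing $\{\undm_\mu\}$ and Ram's $q_\mu = \overline{h}_\mu = (v-1)^{-\ell(\mu)} h_\mu((v-1)x)$ are Hall-dual; the only cosmetic differences are that the paper packages this through the $v$-characteristic map ${\rm ch}_v$ (forward-referencing \eqref{v-ch} and \eqref{eq:v-ch1}) and verifies the duality via a Cauchy-type kernel identity rather than your direct power-sum check (which also works, since the $\la$-ring substitutions $x\mapsto (v-1)x$ and $x\mapsto x/(v-1)$ are adjoint for the Hall form).
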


\begin{proof}
Note that \eqref{eq:f} follows from the definition of $\Psi$,
\eqref{eq:N1}, \eqref{eq:NT2} and \eqref{eq:NF} follow from
Proposition~\ref{prop:N1} and Proposition~\ref{prop:NFT2}.
One the other hand, \eqref{eq:e} follows
from the two identities~\eqref{v-ch} and~\eqref{eq:v-ch1}
in the subsequent section (This can be viewed as a reformulation of
the main result of~\cite{Ram}).
\end{proof}

\begin{remark}
It follows by~\eqref{eq:e} that the restriction $\Psi_n:\mc Z(\HC)\rightarrow\La^n_v$ of $\Psi$
is an algebra isomorphism if we impose an algebra structure on $\La^n_v$
by $s_\la\ast s_\mu=\delta_{\la\mu}\kappa_\la(v)s_\la$.
\end{remark}
%

\subsection{Specialization at $v=1$}
For a partition $\la=(1^{m_1}2^{m_2}\cdots)$, define
$$
z_{\la}=\prod_{i\geq 1}i^{m_i} m_i!.
$$

It is known that the Frobenius morphism from $\C \mf S_n$ to $\La^n $
sends $w$ to $p_\la(x)$ if $w\in C_{\la}$.
This gives rise to an isomorphism (called the classical Frobenius map):
\begin{align}\label{Frobenius}
\psi:\oplus_{n\geq 0}\mc Z(\C\mf S_n)&\longrightarrow \La,  \\
c_{\la}&\mapsto \frac{1}{z_{\la}}p_{\la}(x), \quad
\text{ for } \la\in\mc P_n.
\notag
\end{align}

For a symmetric function $f\in \La_v$, denote when possible by
$f|_{v=1}$ the specialization of $f$ at $v=1$.
\begin{lemma}\label{mbar}
The following holds for any partition $\la$:
$$
\undm_{\la}(x)|_{v=1}
=\frac{1}{z_{\la}}p_{\la}(x).
$$
\end{lemma}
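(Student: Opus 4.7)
The plan is to compute $\undm_\la(x) = (v-1)^{\ell(\la)}\, m_\la\!\left(\frac{x}{v-1}\right)$ by expressing $m_\la$ in the power-sum basis and then letting $v\to 1$ in the resulting formula.

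The key auxiliary identity I would establish is the leading-term expansion
\begin{equation*}
m_\la \;=\; \frac{1}{\prod_{i\geq 1} m_i(\la)!}\, p_\la \;+\; \sum_{\mu:\,\ell(\mu)<\ell(\la)} c_{\la\mu}\, p_\mu,
\end{equation*}
for some rational coefficients $c_{\la\mu}$, where $m_i(\la)$ is the multiplicity of $i$ as a part of $\la$. I would prove it by expanding $p_\la(x) = \prod_{i=1}^{\ell(\la)}\sum_{j\geq 1} x_j^{\la_i}$ and splitting the resulting sum according to whether the indices $(j_1,\ldots,j_{\ell(\la)})$ are all distinct: the distinct-tuples contribution assembles into $\prod_i m_i(\la)!\cdot m_\la(x)$ (the multiplicity coming from permutations of positions carrying equal exponents), while tuples containing repeated indices produce monomials supported on fewer than $\ell(\la)$ variables and therefore lie in the span of $\{m_\nu:\ell(\nu)<\ell(\la)\}$. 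A straightforward induction on $\ell(\la)$, reapplied to rewrite those lower-length $m_\nu$ in the $p$-basis, then yields the claimed identity.

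Substituting this expansion into the definition of $\undm_\la$ and using the $\la$-ring rule $p_k(x/(v-1)) = p_k(x)/(v^k-1)$, I obtain
\begin{equation*}
\undm_\la(x) \;=\; \frac{1}{\prod_i m_i(\la)!}\prod_{i=1}^{\ell(\la)}\frac{(v-1)\,p_{\la_i}(x)}{v^{\la_i}-1} \;+\; \sum_{\ell(\mu)<\ell(\la)} c_{\la\mu}\,(v-1)^{\ell(\la)-\ell(\mu)}\prod_{j}\frac{(v-1)\,p_{\mu_j}(x)}{v^{\mu_j}-1}.
\end{equation*}
Since $\tfrac{v-1}{v^k-1} = \tfrac{1}{1+v+\cdots+v^{k-1}} \to \tfrac{1}{k}$ as $v\to 1$, the leading term specializes to $\frac{p_\la(x)}{\prod_i \la_i \cdot \prod_i m_i(\la)!}$, which equals $p_\la(x)/z_\la$ by the definition of $z_\la$, while every other term carries a strictly positive power of $(v-1)$ and therefore vanishes in the limit.

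The only real content is the leading-term formula for $m_\la$ in the $p$-basis, but this is a routine multinomial-style bookkeeping rather than a substantive obstacle; the subsequent $v\to 1$ calculation and the identification $\prod_i\la_i\cdot\prod_i m_i(\la)! = z_\la$ are immediate.
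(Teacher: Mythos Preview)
Your proof is correct and follows essentially the same approach as the paper's: expand $m_\la$ in the power-sum basis, identify the leading coefficient $1/\prod_i m_i(\la)!$ with all remaining terms supported on $p_\mu$ with $\ell(\mu)<\ell(\la)$, substitute into the definition of $\undm_\la$, and take $v\to 1$. The only difference is that the paper cites Macdonald \cite[I, (6.8), (6.9)]{Mac} for the required properties of the transition matrix $M(m,p)$, whereas you supply a self-contained combinatorial argument via the distinct-index expansion of $p_\la$ together with induction on $\ell(\la)$.
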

\begin{proof}
Let us write $\la$ as $\la =(1^{m_1}2^{m_2}\ldots)$.
By inverting the transition from $p_\mu$ to $m_\la$ \cite[I, (6.8), (6.9)]{Mac},
we obtain that
$$
m_{\la}(x)=\sum_{\mu}a_{\la\mu}p_{\mu}(x),
$$
where
\begin{align}
a_{\la\la}&=\prod_{i\geq 1}\frac{1}{m_i!},
  \notag \\
a_{\la\mu}\neq 0  \ (\la \ne \mu) & \Longrightarrow \mu\geq\la \text{ and } \ell(\mu)< \ell(\la).
\label{ptom}
\end{align}
This implies that
\begin{align*}
\undm_{\la}(x)
&=(v-1)^{\ell(\la)}
\sum_{\mu}a_{\la\mu}p_{\mu}\big(\frac{x}{v-1}\big)\\
&=(v-1)^{\ell(\la)}
\sum_{\mu}a_{\la\mu}\prod_{k=1}^{\ell(\mu)}\frac{1}{v^{\mu_k}-1}p_{\mu}(x)\\
&=\sum_{\mu}a_{\la\mu}
\frac{(v-1)^{\ell(\la)}}{\prod_{k=1}^{\ell(\mu)}(v^{\mu_k}-1)}p_{\mu}(x).
\end{align*}
Let $\mu\in\mc P_n$ be such that $a_{\la\mu}\neq 0$.
Then \eqref{ptom} holds, and
$$
\frac{(v-1)^{\ell(\la)}}{\prod_{k=1}^{\ell(\mu)}(v^{\mu_k}-1)}|_{v=1}
=\left\{
\begin{array}{cc}
0,&\text{ if }\ell(\mu)<\ell(\la),\\
\frac{1}{\prod^{\ell(\mu)}_{k=1}\mu_k},&\text{ if }\ell(\mu)=\ell(\la).
\end{array}
\right.
$$
Now the condition $\ell(\mu)=\ell(\la)$ implies that $\mu=\la$
by \eqref{ptom}, and hence
\begin{align*}
\undm_{\la}(x)|_{v=1}
&=a_{\la\la}\frac{1}{\prod^{\ell(\la)}_{k=1}\la_k}p_{\la}(x)\\
&=\frac{1}{\prod_{i\geq 1} m_i!i^{m_i}}p_{\la}(x)
=\frac{1}{z_{\la}}p_{\la}(x).
\end{align*}
The lemma is proved.
\end{proof}
By~\eqref{Frobenius}, Lemma~\ref{mbar} and the fact that $f^*_{\la}$ specializes to $c_{\la}$ at $v=1$,
we have the following.
\begin{proposition}\label{prop:Psin}
The specialization of the (quantum) Frobenius map
$\Psi$ at $v=1$ coincides with the classical Frobenius map $\psi$.
\end{proposition}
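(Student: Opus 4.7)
The plan is to verify the equality $\Psi|_{v=1}=\psi$ by evaluating both sides on a convenient basis that behaves well under specialization. The natural choice is the Geck--Rouquier basis $\{f^*_\la\mid \la\in\mc P_n\}$ of $\mc Z(\HC)$, because it was observed right after its construction in Section~\ref{sec:center} that $f^*_\la$ specializes at $v=1$ to the conjugacy class sum $c_\la$, which forms the standard basis of $\mc Z(\C\mf S_n)$ on which $\psi$ is defined in \eqref{Frobenius}.

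First I would note that both maps in question are $\C$-linear and their domains match in the limit: the basis $\{f^*_\la\}$ specializes entry-by-entry to $\{c_\la\}$, so any identity of the form $\Psi(f^*_\la)|_{v=1}=\psi(c_\la)$ established for every $\la$ will give the desired equality of maps. This reduces the problem to a single computation.

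Next I would compute $\Psi(f^*_\la)|_{v=1}$. By the definition \eqref{Psi} of $\Psi$, we have $\Psi(f^*_\la)=\undm_\la(x)$, so the question becomes whether $\undm_\la(x)|_{v=1}=\frac{1}{z_\la}p_\la(x)$. This is precisely the content of Lemma~\ref{mbar}, which was already established in this section by inverting the transition between the monomial and power-sum bases and tracking which terms survive the $v=1$ limit. Comparing with the definition $\psi(c_\la)=\frac{1}{z_\la}p_\la(x)$ from \eqref{Frobenius} yields $\Psi(f^*_\la)|_{v=1}=\psi(c_\la)$, and the proposition follows.

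There is essentially no obstacle, since the technical heart of the argument, namely controlling the $v\to 1$ limit of $(v-1)^{\ell(\la)}m_\la\bigl(\tfrac{x}{v-1}\bigr)$, has been isolated in Lemma~\ref{mbar}. The proof of the proposition itself is then a one-line assembly of that lemma with the specialization property $f^*_\la|_{v=1}=c_\la$ and the definitions of $\Psi$ and $\psi$.
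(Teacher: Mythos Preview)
Your proposal is correct and follows exactly the paper's own argument: the paper's proof is a single sentence invoking \eqref{Frobenius}, Lemma~\ref{mbar}, and the specialization $f^*_\la|_{v=1}=c_\la$, which is precisely the assembly you describe.
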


\section{The $v$-characteristic map} \label{sec:v-ch}
\subsection{The $v$-characteristic map ${\rm ch}_v$}
Denote by $R^n$ and $R^n_v$ the Grothendieck groups of the categories of
$\C\mf S_n$-modules and $\HC$-modules, respectively.
Set
$$
R=\bigoplus_{n\geq 0}\C\otimes_{\Z}R^n,\quad
R_v=\bigoplus_{n\geq 0}\C(v)\otimes_{\Z}R^n_v.
$$
It is known that $R$ is a commutative algebra with
the multiplication given by
$$
[M]\cdot[N]= \big[{\rm ind}^{\mf S_{m+n}}_{\mf S_m\otimes\mf S_n}M\otimes N \big]
$$
for $\mf S_m$-module $M$ and $\mf S_n$-module $N$.
Similarly, $R_v$ also affords an algebra structure via the functor
${\rm ind}^{\mc H_{m+n}}_{\mc H_m\otimes\mc H_n}$.
Below we freely identify elements in $\C\otimes_{\Z}R^n$ and $\C(v)\otimes_{\Z}R^n_v$
with trace functions on $\C\mf S_n$ and $\HC$, respectively.

The Frobenius characteristic map (cf. \cite[I. 7]{Mac})
\begin{align*}
{\rm ch}: R&\longrightarrow \La\\
\chi&\mapsto
\sum_{\mu\in\mc P_n}\chi(w_{\mu})\psi(c_{\mu}) \quad \text{ for }\chi\in R^n
\end{align*}
sends the irreducible character
$\chi^{\la}$ to the Schur function $s_{\la}$ for any partition $\la$.
Observe that the characteristic map ${\rm ch}$ can also be described using
$\ee^\la$ in \eqref{idempotent0} for $\la\in\mc P_n$ as
\begin{equation}  \label{eq:chpsi}
{\rm ch}(\chi^\la)=\frac{n!}{d_\la}\psi(\ee^\la).
\end{equation}

Recall the generic degrees $d_\la(v)$ from \eqref{generic}.
In light of \eqref{eq:chpsi},
we make the following.

\begin{definition}
Define the Frobenius characteristic map  ${\rm ch}_v$ to be the linear map by
\begin{align}
{\rm ch}_v: R_v&\longrightarrow \La_v,
  \label{v-ch}\\
\chi^\la_v&\mapsto \frac{P_n(v)}{d_\la(v)}\Psi(\ee^\la_v), \quad \forall \la \in \mc P_n.
 \notag
\end{align}
\end{definition}
%

For $r\geq 1$ and for partition $\mu=(\mu_1,\ldots,\mu_{\ell})$ of length $\ell$,
we set
\begin{align}
\overline{h}_r(x)&=\frac{1}{v-1}h_r((v-1)x) \notag,%
  \\
\overline{h}_{\mu}(x)&=\overline{h}_{\mu_1}(x)\overline{h}_{\mu_2}(x)
\ldots \overline{h}_{\mu_{\ell}}(x)
=\frac{1}{(v-1)^{\ell(\mu)}}h_\mu((v-1)x).
 \label{eq:qbar}
\end{align}

Recall that $\{\chi^{\la}_v~|\la\in\mc P_n\}$ is a complete set of
irreducible characters of the Hecke algebra $\HC$, and
recall the following  Frobenius character formula for $\HC$ of Ram.

\begin{proposition}\cite[Theorem 4.14]{Ram}
Suppose $\mu\in\mc P_n$.
Then
\begin{equation}\label{Ram}
\overline{h}_{\mu}(x)=\sum_{\la\in\mc P_n}\chi^{\la}_v(T_{w_{\mu}})s_{\la}(x).
\end{equation}
\end{proposition}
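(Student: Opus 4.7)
The plan is to deduce \eqref{Ram} via quantum Schur--Weyl duality. Fix an integer $N\ge n$, let $V=\C(v)^N$, and consider the commuting actions of $U_v(\mathfrak{gl}_N)$ on the left and $\HC$ on the right of $V^{\otimes n}$, with $\HC$ acting through the R-matrix braidings on adjacent tensor factors. Quantum Schur--Weyl duality then yields the bimodule decomposition
\[
V^{\otimes n}\cong \bigoplus_{\substack{\la\in\mc P_n\\ \ell(\la)\le N}}L^N_\la\otimes S^\la_v,
\]
where $L^N_\la$ is the irreducible polynomial $U_v(\mathfrak{gl}_N)$-module of highest weight $\la$ with character $s_\la(x_1,\ldots,x_N)$, and $S^\la_v$ is the irreducible $\HC$-module affording $\chi^\la_v$.

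First I would introduce the torus element $t\in U_v(\mathfrak{gl}_N)$ acting on $V$ as $\mathrm{diag}(x_1,\ldots,x_N)$. Using the decomposition above together with $\text{tr}_{L^N_\la}(t)=s_\la(x_1,\ldots,x_N)$, the weighted trace of $T_{w_\mu}$ on $V^{\otimes n}$ computes, in one direction, as
\[
\text{tr}_{V^{\otimes n}}\bigl(t\otimes T_{w_\mu}\bigr) \;=\; \sum_{\substack{\la\in\mc P_n\\ \ell(\la)\le N}} s_\la(x_1,\ldots,x_N)\,\chi^\la_v(T_{w_\mu}).
\]

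Next I would evaluate the same trace directly. Since $w_\mu$ is a product of disjoint cycles acting on disjoint blocks of sizes $\mu_1,\ldots,\mu_{\ell(\mu)}$, the trace factors over these blocks and reduces to the case of a single $k$-cycle $w_{(k)}=s_1s_2\cdots s_{k-1}$. A direct R-matrix calculation on $V^{\otimes k}$ should then identify
\[
\text{tr}_{V^{\otimes k}}\bigl(t\otimes T_{w_{(k)}}\bigr)\;=\;\overline{h}_k(x_1,\ldots,x_N),
\]
and multiplicativity over cycles gives $\overline{h}_\mu(x_1,\ldots,x_N)$. Equating the two expressions for the trace and letting $N\to\infty$ yields \eqref{Ram} stably in $\La_v$.

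The main obstacle will be the single-cycle trace computation: one must carefully unwind the action of $T_1T_2\cdots T_{k-1}$ on basis tensors $v_{i_1}\otimes\cdots\otimes v_{i_k}$, identify which configurations contribute to the diagonal after applying $t$, and resum the weighted contributions to recover the generating function $\sum_k \overline{h}_k(x)z^k$ of $\overline{h}_k$. An alternative route, closer to Ram's original strategy, would replace Schur--Weyl with the seminormal representations of $\HC$ and a direct evaluation of $\chi^\la_v(T_{w_\mu})$ on primitive idempotents indexed by standard Young tableaux; that bypasses the quantum group but shifts the essential difficulty to substantially heavier tableau combinatorics.
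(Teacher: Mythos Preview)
The paper does not give its own proof of this proposition: it is simply quoted from \cite[Theorem~4.14]{Ram} with no argument supplied, and is then used as input for the dual reformulation \eqref{Ramdual} and for Theorem~\ref{thm:v-ch}. So there is no ``paper's proof'' to compare against.

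That said, your sketch is essentially Ram's own argument in the cited reference. Ram works with the commuting actions of $U_v(\mathfrak{gl}_N)$ and $\HC$ on $V^{\otimes n}$, takes the bitrace of a torus element against $T_{w_\mu}$, uses the bimodule decomposition to get the Schur-function expansion on one side, and reduces the other side to the single-cycle case by the block factorization you describe. The identity $\text{tr}_{V^{\otimes k}}(t\otimes T_{w_{(k)}})=\overline{h}_k(x_1,\ldots,x_N)$ is exactly the key computation carried out in \cite{Ram}; your remark that this is where the real work lies is accurate, and Ram does it by tracking which basis tensors $v_{i_1}\otimes\cdots\otimes v_{i_k}$ can return to themselves under $T_1\cdots T_{k-1}$ and summing the resulting $v$-weights. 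Your alternative via seminormal forms is also viable but, as you note, trades the $R$-matrix bookkeeping for heavier tableau combinatorics; Ram's route is the cleaner of the two.
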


The Frobenius character formula admits the following dual reformulation.

\begin{proposition}
Suppose $\la\in\mc P_n$. Then
\begin{align}
s_{\la}(x)
&=\sum_{\mu\in\mc P_n}\chi^{\la}_v(T_{w_{\mu}})\undm_{\mu}(x).
\label{Ramdual}
\end{align}
\end{proposition}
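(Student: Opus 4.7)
The plan is to deduce \eqref{Ramdual} from Ram's identity \eqref{Ram} by a Hall-duality argument. First I would extend the standard Hall inner product $\langle\cdot,\cdot\rangle$ on $\La$ to a $\C(v)$-bilinear pairing on $\La_v$ by $\C(v)$-bilinearity, so that $\{s_\la\}$ remains an orthonormal basis and $\langle p_\la,p_\mu\rangle = z_\la \delta_{\la\mu}$.

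The key step is to show that the families $\{\overline{h}_\mu\}$ and $\{\undm_\la\}$ are dual bases of $\La_v$ with respect to this pairing. Recalling that $\overline{h}_\mu(x) = (v-1)^{-\ell(\mu)} h_\mu((v-1)x)$ and $\undm_\la(y) = (v-1)^{\ell(\la)} m_\la(y/(v-1))$, the prefactors cancel in the sum
$$
\sum_\mu \overline{h}_\mu(x)\,\undm_\mu(y) = \sum_\mu h_\mu((v-1)x)\, m_\mu\!\Big(\frac{y}{v-1}\Big).
$$
Now I apply the plethystic substitutions $p_k(x) \mapsto (v^k-1) p_k(x)$ and $p_k(y) \mapsto p_k(y)/(v^k-1)$ to the classical Cauchy identity $\sum_\mu h_\mu(x) m_\mu(y) = \sum_\la z_\la^{-1} p_\la(x) p_\la(y)$. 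On the power-sum side the factors $\prod_i (v^{\la_i}-1)$ and $\prod_i (v^{\la_i}-1)^{-1}$ cancel, so the right-hand side is unchanged, yielding
$$
\sum_\mu \overline{h}_\mu(x)\,\undm_\mu(y) \;=\; \sum_\la z_\la^{-1} p_\la(x) p_\la(y) \;=\; \sum_\la s_\la(x) s_\la(y).
$$
Since $\{s_\la\}$ is self-dual under $\langle\cdot,\cdot\rangle$, this Cauchy-type kernel identity is exactly the assertion $\langle \overline{h}_\mu, \undm_\la\rangle = \delta_{\mu\la}$.

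Given this duality, I would expand $s_\la$ in the basis $\{\undm_\mu\}$ and extract coefficients using the dual basis $\{\overline{h}_\mu\}$:
$$
s_\la = \sum_{\mu\in\mc P_n} \langle s_\la, \overline{h}_\mu\rangle\, \undm_\mu.
$$
But Ram's formula \eqref{Ram} gives precisely $\langle s_\la, \overline{h}_\mu\rangle = \chi^\la_v(T_{w_\mu})$ by orthonormality of the Schur basis, and \eqref{Ramdual} follows. The only mildly delicate step is the plethystic cancellation in the Cauchy kernel; once that is verified, the rest is a formal duality computation.
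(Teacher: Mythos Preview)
Your proposal is correct and follows essentially the same strategy as the paper: establish the Cauchy-type identity $\sum_\mu \overline{h}_\mu(x)\,\undm_\mu(y)=\sum_\la s_\la(x)s_\la(y)$, interpret it as saying $\{\overline{h}_\mu\}$ and $\{\undm_\mu\}$ are dual with Schur functions orthonormal, and then read off \eqref{Ramdual} as the dual of Ram's formula \eqref{Ram}. The only cosmetic difference is that the paper obtains the kernel identity by quoting the Macdonald formula $\prod_{i,j}\frac{1-x_iy_j}{1-vx_iy_j}=\sum_\la h_\la((v-1)x)m_\la(y)$ and substituting $y\mapsto y/(v-1)$, whereas you derive it directly from the power-sum form of the Cauchy kernel via plethystic cancellation; these are equivalent.
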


\begin{proof}
We have the following  (cf. \cite[III, (4.2), (4.7)]{Mac})
\begin{align*}
\prod_{i,j}\frac{1-x_iy_j}{1-vx_iy_j}
&=\sum_{\la}h_{\la}((v-1)x)m_{\la}(y).
\end{align*}
Hence we have by \eqref{eq:mbar} and \eqref{eq:qbar} that
\begin{align*}
\prod_{i,j}\frac{1}{1-x_iy_j}
&=\sum_{\la}h_{\la}((v-1)x)m_{\la}\Big(\frac{y}{v-1}\Big)
 \notag\\
&=\sum_{\la}  \overline{h}_{\la}(x) \undm_{\la}(y).
\end{align*}
This together with the Cauchy identity
$
\prod_{i,j}\frac{1}{1-x_iy_j}
=\sum_{\la}s_{\la}(x)s_{\la}(y)
$
implies that
\begin{align}
\sum_{\la}  \overline{h}_{\la}(x) \undm_{\la}(y)=\sum_{\la}s_{\la}(x)s_{\la}(y).
\label{Cauchy}
\end{align}
This last identity \eqref{Cauchy} can be reinterpreted by saying that
$ \overline{h}_{\mu}(x)$  and $\undm_{\mu}(y)$ form
dual bases with respect to a bilinear form in $\La_v$
such that $s_\la$ are orthonormal.
Hence \eqref{Ramdual} follows from, and is indeed equivalent to,
\eqref{Ram}.
\end{proof}

\begin{theorem}\label{thm:v-ch}
The following holds for all partitions $\la$:
\begin{equation}\label{eq:v-ch1}
{\rm ch}_v(\chi^{\la}_v)=s_{\la}(x).
\end{equation}
\end{theorem}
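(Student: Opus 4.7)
The plan is to assemble the statement from three pieces already in hand: the definition of $\mathrm{ch}_v$, the explicit expression for the primitive idempotent $\varepsilon^\lambda_v$ in the Geck--Rouquier basis, and the dual form of Ram's character formula.

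First I would rewrite the normalizing factor. By the definition \eqref{generic} of the generic degree, $\frac{P_n(v)}{d_\lambda(v)} = \kappa_\lambda(v)$, so the definition \eqref{v-ch} simplifies to
\begin{equation*}
\mathrm{ch}_v(\chi^\lambda_v) = \kappa_\lambda(v)\,\Psi(\varepsilon^\lambda_v).
\end{equation*}

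Next I would expand $\varepsilon^\lambda_v$ in the Geck--Rouquier basis via Lemma~\ref{idempotent}(2), giving
\begin{equation*}
\kappa_\lambda(v)\,\varepsilon^\lambda_v = \sum_{\mu\in\mc P_n}\chi^\lambda_v(T_{w_\mu})\,f^*_\mu,
\end{equation*}
then apply $\Psi$ termwise using its defining formula $\Psi(f^*_\mu) = \underline{m}_\mu(x)$ from \eqref{Psi}. Combining this with the simplification above yields
\begin{equation*}
\mathrm{ch}_v(\chi^\lambda_v) = \sum_{\mu\in\mc P_n}\chi^\lambda_v(T_{w_\mu})\,\underline{m}_\mu(x).
\end{equation*}

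Finally I would invoke the dual Frobenius character formula \eqref{Ramdual}, which asserts exactly that the right-hand side above equals $s_\lambda(x)$. Since \eqref{Ramdual} is already established from Ram's formula \eqref{Ram} via the Cauchy pairing showing $\{\overline{h}_\mu(x)\}$ and $\{\underline{m}_\mu(y)\}$ are dual bases with Schur functions orthonormal, the theorem follows immediately. There is no real obstacle here: the content of the theorem is precisely that the definition of $\mathrm{ch}_v$ together with Lemma~\ref{idempotent}(2) repackages Ram's formula into the statement that irreducible characters map to Schur functions. The only subtle point worth flagging is that this argument is what justifies the final identity \eqref{eq:e} in Theorem~\ref{PsiLascoux}, since one can read the chain of equalities backwards to recover $\Psi(\varepsilon^\lambda_v) = \kappa_\lambda(v)^{-1}\,s_\lambda(x)$.
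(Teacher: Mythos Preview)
Your proof is correct and is essentially identical to the paper's own argument: both simplify the normalizing factor via \eqref{generic}, expand $\varepsilon^\lambda_v$ in the Geck--Rouquier basis using Lemma~\ref{idempotent}(2), apply $\Psi$ via its definition \eqref{Psi}, and then invoke the dual Frobenius formula \eqref{Ramdual}. Your closing remark about recovering \eqref{eq:e} is also exactly how the paper uses this theorem in the proof of Theorem~\ref{PsiLascoux}.
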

\begin{proof}
Recall from~\eqref{selement} that $\kappa_\la(v)$ are the Schur elements.
By~\eqref{generic}, \eqref{v-ch} and Lemma~\ref{idempotent}, we obtain
\begin{align*}
{\rm ch}_v(\chi^{\la}_v)
&=\frac{P_n(v)}{d_\la(v)\kappa_\la(v) }\sum_{\mu\in\mc P_n}\chi^{\la}_v(T_{w_{\mu}})\Psi(f^*_\mu)\\
&=\sum_{\mu\in\mc P_n}\chi^{\la}_v(T_{w_{\mu}})\undm_{\mu}(x).
\end{align*}
The theorem now follows by \eqref{Ramdual}.
\end{proof}

\begin{remark}
Fomin and Thibon \cite{FT} took the correspondence in Theorem~\ref{thm:v-ch}
between irreducible characters and Schur functions as a definition
of the characteristic map for Hecke algebras,
though they did not study the centers.
\end{remark}
\subsection{A commutative diagram}
\begin{lemma}\label{lem:algebraRv}
Suppose $\mu\in\mc P_m,\la\in\mc P_n$.
Then
$$
{\rm ind}^{\mc H_{m+n}}_{\mc H_m\otimes\mc H_n}(\chi^\mu_v\otimes\chi^\la_v)
=\sum_{\nu\in\mc P_{m+n}}c^\nu_{\mu\la}\chi^\nu_v,
$$
where $c^\nu_{\mu\la}$ are the Littlewood-Richardson coefficients.
\end{lemma}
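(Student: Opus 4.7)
The plan is to apply the Frobenius characteristic map ${\rm ch}_v$ of Theorem~\ref{thm:v-ch} to both sides of the claimed identity, reducing it to the Littlewood--Richardson rule $s_\mu(x) s_\la(x) = \sum_\nu c^\nu_{\mu\la} s_\nu(x)$ in $\La_v$. Since Theorem~\ref{thm:v-ch} gives ${\rm ch}_v(\chi^\nu_v) = s_\nu(x)$, the lemma becomes equivalent to checking that ${\rm ch}_v$ intertwines the induction product on $R_v$ with the ordinary multiplication on $\La_v$; more precisely,
\[
{\rm ch}_v\bigl({\rm ind}^{\mc H_{m+n}}_{\mc H_m\otimes\mc H_n}(\chi^\mu_v\otimes\chi^\la_v)\bigr) = s_\mu(x)\,s_\la(x).
\]

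To establish this compatibility I would exploit Ram's character formula~\eqref{Ram} together with the multiplicativity $\overline{h}_{\rho_1\cup\rho_2}(x) = \overline{h}_{\rho_1}(x)\,\overline{h}_{\rho_2}(x)$, which is immediate from~\eqref{eq:qbar}. Multiplying the two expansions supplied by~\eqref{Ram} for $\overline{h}_{\rho_1}$ and $\overline{h}_{\rho_2}$, and then extracting the coefficient of $s_\nu(x)$, yields the key identity
\[
\chi^\nu_v(T_{w_{\rho_1\cup\rho_2}}) = \sum_{\mu\vdash m,\,\la\vdash n} c^\nu_{\mu\la}\,\chi^\mu_v(T_{w_{\rho_1}})\,\chi^\la_v(T_{w_{\rho_2}})
\]
valid for all $\rho_1\in\mc P_m$, $\rho_2\in\mc P_n$, $\nu\in\mc P_{m+n}$. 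Choosing the labeling so that $w_{\rho_1\cup\rho_2} = w_{\rho_1}w_{\rho_2}$ sits inside the Young subgroup $\mf S_m\times\mf S_n$, this is precisely the statement that the restriction of $\chi^\nu_v$ to $\mc H_m\otimes\mc H_n$ decomposes as $\sum_{\mu,\la} c^\nu_{\mu\la}\,\chi^\mu_v\otimes\chi^\la_v$, once we know that a trace function on $\mc H_m\otimes\mc H_n$ is determined by its values on the $T_{w_{\rho_1}}\otimes T_{w_{\rho_2}}$. Frobenius reciprocity in the semisimple Hecke algebra setting then converts this restriction formula into the induction formula in the lemma.

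The main obstacle will be invoking two external classical facts: Frobenius reciprocity for induction/restriction between $\mc H_m\otimes\mc H_n$ and $\mc H_{m+n}$ with respect to the canonical symmetrizing trace $\tau$, and the Geck--Pfeiffer result that trace functions on a Hecke algebra of a (product of) symmetric group(s) are determined by their values at minimal length conjugacy class representatives. Both are standard in the semisimple setting over $\C(v)$ and may be cited from \cite{GP}. An alternative approach that bypasses Frobenius reciprocity would be to compute ${\rm ch}_v$ of the induced character directly via a Mackey-type formula on the $T_{w_\rho}$; the argument sketched above is essentially the character-theoretic shadow of that computation, packaged more cleanly via the duality between multiplication in $\La_v$ and restriction of characters.
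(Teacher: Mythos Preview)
Your approach is correct but takes a genuinely different route from the paper's. The paper gives a two-line specialization argument: since $\mc H_{m+n}$ is semisimple over $\C(v)$, one writes ${\rm ind}^{\mc H_{m+n}}_{\mc H_m\otimes\mc H_n}(\chi^\mu_v\otimes\chi^\la_v)=\sum_\nu u_\nu\,\chi^\nu_v$ with $u_\nu\in\Z_{\ge 0}$ independent of $v$; specializing at $v=1$ recovers the symmetric-group induction ${\rm ind}^{\mf S_{m+n}}_{\mf S_m\times\mf S_n}(\chi^\mu\otimes\chi^\la)$, where the multiplicities are classically known to be the Littlewood--Richardson coefficients $c^\nu_{\mu\la}$.

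Your argument instead deduces a \emph{restriction} formula directly from Ram's identity~\eqref{Ram} and the multiplicativity $\overline{h}_{\rho_1\cup\rho_2}=\overline{h}_{\rho_1}\overline{h}_{\rho_2}$, and then converts it via Frobenius reciprocity. This is valid and has the merit of avoiding any passage to $v=1$; it also makes the compatibility of ${\rm ch}_v$ with the induction product visible at the level of character values. The cost is two external ingredients you correctly identify (Frobenius reciprocity for the symmetric pair $(\mc H_{m+n},\mc H_m\otimes\mc H_n)$, and the Geck--Pfeiffer fact that trace functions are determined on the $T_{w_\rho}$), together with the small bookkeeping step of matching $\chi^\nu_v(T_{w_{\rho_1\cup\rho_2}})$ with $\chi^\nu_v(T_{w_{\rho_1}}T_{w_{\rho_2}})$ via equality of character values on minimal-length representatives of the same conjugacy class. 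The paper's proof is shorter precisely because the ``integer multiplicities survive specialization'' trick absorbs all of this at once.
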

\begin{proof}
Assume $\mu\in\mc P_m,\la\in\mc P_n$.
The induced character
${\rm ind}^{\mc H_{m+n}}_{\mc H_m\otimes\mc H_n}(\chi^\mu_v\otimes\chi^\la_v)$
can be written as
$$
{\rm ind}^{\mc H_{m+n}}_{\mc H_m\otimes\mc H_n}(\chi^\mu_v\otimes\chi^\la_v)
=\sum_{\nu\in\mc P_{m+n}}u_\nu\chi^\nu_v,
$$
for some non-negative integer $u_\nu$.
Then specializing at $v=1$, we obtain
$$
{\rm ind}^{\mf S_{m+n}}_{\mf S_m\otimes\mf S_n}(\chi^\mu\otimes\chi^\la)
=\sum_{\nu\in\mc P_{m+n}}u_\nu\chi^\nu.
$$
Then it is known from the representation theory of symmetric groups that
$u_\nu$ coincides with the Littlewood-Richardson coefficients $c^\nu_{\mu\la}$
for $\nu\in\mc P_{m+n}$.
\end{proof}

Then by~\eqref{eq:v-ch1} and Lemma \ref{lem:algebraRv}, we have the following.

\begin{theorem}
The characteristic map ${\rm ch}_v: R_v\rightarrow\La_v$ is an algebra isomorphism.
\end{theorem}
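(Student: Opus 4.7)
The plan is to verify the two conditions for ${\rm ch}_v$ to be an algebra isomorphism: bijectivity and multiplicativity. Bijectivity is essentially immediate: the irreducible characters $\{\chi^\la_v \mid \la \in \mc P_n, n \geq 0\}$ form a $\C(v)$-basis of $R_v$, and by Theorem~\ref{thm:v-ch} they are sent to the Schur functions $\{s_\la\}$, which form a $\C(v)$-basis of $\La_v$. So ${\rm ch}_v$ is a graded linear isomorphism.

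For multiplicativity, since both products are $\C(v)$-bilinear it suffices to check equality on the basis $\{\chi^\mu_v \otimes \chi^\la_v\}$. Fix $\mu \in \mc P_m$ and $\la \in \mc P_n$. On the $R_v$ side, Lemma~\ref{lem:algebraRv} gives
\[
\chi^\mu_v \cdot \chi^\la_v = {\rm ind}^{\mc H_{m+n}}_{\mc H_m \otimes \mc H_n}(\chi^\mu_v \otimes \chi^\la_v) = \sum_{\nu \in \mc P_{m+n}} c^\nu_{\mu\la}\, \chi^\nu_v,
\]
where $c^\nu_{\mu\la}$ are the classical Littlewood--Richardson coefficients. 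Applying ${\rm ch}_v$ and using \eqref{eq:v-ch1} yields $\sum_\nu c^\nu_{\mu\la}\, s_\nu$. On the $\La_v$ side, the Littlewood--Richardson rule in $\La$ (which remains valid after base change to $\C(v)$, since the Schur basis and the structure constants are defined over $\Z$) gives $s_\mu \cdot s_\la = \sum_\nu c^\nu_{\mu\la}\, s_\nu$. The two expressions agree, so ${\rm ch}_v(\chi^\mu_v \cdot \chi^\la_v) = s_\mu s_\la = {\rm ch}_v(\chi^\mu_v) \cdot {\rm ch}_v(\chi^\la_v)$, completing the argument.

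There is essentially no obstacle here, since all the hard work has been done: Ram's character formula (via Theorem~\ref{thm:v-ch}) identifies ${\rm ch}_v(\chi^\la_v) = s_\la$, and Lemma~\ref{lem:algebraRv} transports the algebra structure on $R_v$ into combinatorial form matching the multiplication of Schur functions. The proof is thus a one-line check after these inputs, and can be written simply as ``combine \eqref{eq:v-ch1} with Lemma~\ref{lem:algebraRv} and the Littlewood--Richardson rule.''
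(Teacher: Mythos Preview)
Your proof is correct and follows exactly the same approach as the paper: the paper's own proof is literally the one-line ``by~\eqref{eq:v-ch1} and Lemma~\ref{lem:algebraRv}'' that you anticipated at the end. Your expanded version simply spells out the bijectivity check and the Littlewood--Richardson matching that the paper leaves implicit.
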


Recall from~\eqref{dual} that the map
\begin{align*}
\Theta_n:\C(v)\otimes_{\Z} R^n_v&\longrightarrow \mc Z(\HC)\\
\chi&\mapsto\chi^*=\sum_{w\in\mf S_n}\chi(T_w)T_w^\vee
\end{align*}
is an isomorphism.
This leads to a linear isomorphism
$$
\Theta=\oplus_{n\geq 0}\Theta_n: R_v\longrightarrow \mc Z.
$$

\begin{proposition}
We have a commutative diagram of isomorphisms of graded algebras:
\begin{equation}\label{eq:cd}
\xymatrix{
R_v \ar[r]^{{\Theta}} \ar[d]_{{\rm ch}_v} &\mc Z \ar[ld]^{\Psi}\\
\La_v}
\end{equation}
\end{proposition}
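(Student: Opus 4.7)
The plan is to reduce the proposition to (i) verifying commutativity of the triangle on the basis $\{\chi^\la_v\}$ of $R_v$, and (ii) invoking the previously established facts that $\Psi$ and ${\rm ch}_v$ are algebra isomorphisms. Once commutativity is known, the algebra isomorphism property of $\Theta$ follows for free.

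First I would check commutativity by evaluating both compositions on an irreducible character $\chi^\la_v$ with $\la \in \mc P_n$. By definition of $\Theta$ and Lemma~\ref{idempotent}(2), one has
\begin{equation*}
\Theta(\chi^\la_v) = (\chi^\la_v)^* = \kappa_\la(v)\, \ee^\la_v,
\end{equation*}
so that $\Psi(\Theta(\chi^\la_v)) = \kappa_\la(v)\, \Psi(\ee^\la_v)$. On the other hand, by the definition \eqref{v-ch} of ${\rm ch}_v$ together with \eqref{generic}, we have
\begin{equation*}
{\rm ch}_v(\chi^\la_v) = \frac{P_n(v)}{d_\la(v)}\, \Psi(\ee^\la_v) = \kappa_\la(v)\, \Psi(\ee^\la_v).
\end{equation*}
Thus $\Psi\circ\Theta$ and ${\rm ch}_v$ agree on each $\chi^\la_v$, and linearity gives commutativity of the triangle.

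Next I would record that all three maps are graded: the degree-$n$ component $R^n_v$ is spanned by $\{\chi^\la_v : \la\in\mc P_n\}$, and $\Theta$ sends this into $\mc Z(\HC)$ by construction, while $\Psi$ and ${\rm ch}_v$ send the corresponding targets into $\La^n_v$ by Theorem~\ref{thm:Psi} and \eqref{eq:v-ch1} respectively. Combining this with the fact that $\Psi$ is an algebra isomorphism (Theorem~\ref{thm:Psi}) and ${\rm ch}_v$ is an algebra isomorphism (the theorem preceding the proposition), the identity $\Theta = \Psi^{-1} \circ {\rm ch}_v$ forced by commutativity automatically makes $\Theta$ an isomorphism of graded algebras as well.

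There is no serious obstacle: the entire content is packaged in Lemma~\ref{idempotent}(2), which lets us rewrite $(\chi^\la_v)^\ast$ as a scalar multiple of $\ee^\la_v$, and the fact that the particular scalar $\kappa_\la(v) = P_n(v)/d_\la(v)$ is exactly the normalization built into the definition of ${\rm ch}_v$. The mild subtlety to flag is that we are \emph{deducing} the multiplicativity of $\Theta$ from that of $\Psi$ and ${\rm ch}_v$ rather than verifying it directly from the formula $\chi \mapsto \sum_{w} \chi(T_w)T_w^\vee$; a direct verification would require comparing induction of characters in $R_v$ with the relative-norm product on $\mc Z$, which is precisely what the commutative diagram lets us bypass.
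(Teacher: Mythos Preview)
Your proposal is correct and follows essentially the same approach as the paper's own proof: verify commutativity on the basis $\{\chi^\la_v\}$ and then deduce that $\Theta$ is a graded algebra isomorphism from the already-established fact that $\Psi$ and ${\rm ch}_v$ are. The only cosmetic difference is that the paper routes the commutativity check through the common value $s_\la(x)$ (invoking \eqref{eq:v-ch1} and \eqref{eq:e}), whereas you compare both sides directly as $\kappa_\la(v)\,\Psi(\ee^\la_v)$ straight from the definitions, which is in fact slightly more economical.
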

\begin{proof}
It is easy to check that
${\rm ch}_v(\chi^\la_v)=s_\la(x)=\Psi((\chi^\la_v)^*)=\Psi(\Theta(\chi^\la_v))$
for a partition $\la$.
Hence the diagram is commutative.
Since ${\rm ch}_v$ and $\Psi$ are isomorphisms of graded algebra, so is $\Theta$.
\end{proof}

\end{document}